\journalname{BIT}
\newcommand\Order{\mathcal{O}}
\newcommand\nA{\mathcal{A}}
\newcommand\nS{\mathcal{S}}
\newcommand\nR{\mathcal{R}}
\newcommand\nB{\mathcal{B}}
\newcommand\nW{\mathcal{W}}
\newcommand\nE{\mathcal{E}}
\newcommand\nL{\mathcal{L}}
\newcommand{\ee}{\mathrm{e}}
\newcommand{\dd}{\mathrm{d}}
\renewcommand{\AA}{\mathtt{A}}
\newcommand{\BB}{\mathtt{B}}
\renewcommand{\gg}{\mathfrak{g}}
\newcommand{\Id}{\mathrm{Id}}
\newcommand{\grade}{\mathrm{grade}}
\newcommand{\coeff}{\mathrm{coeff}}
\begin{document}
\title{Order conditions for exponential integrators\thanks{This 
work was supported by  the Austrian Science Fund (FWF) under Grant P 30819-N32.}}

\author{Harald Hofst\"{a}tter}

\institute{Harald Hofst\"{a}tter \at
           Fakult\"{a}t f\"{u}r Mathematik, Universit\"{a}t Wien, 
           Oskar-Morgenstern-Platz~1, 1090 Wien, Austria. 
           \email{hofi@harald-hofstaetter.at}           
}

\date{\vspace{5mm}}
% The correct dates will be entered by the editor

\maketitle

\begin{abstract}This paper provides an algebraic framework for the generation
of order conditions for the construction of exponential integrators like 
splitting and Magnus-type methods for the numerical solution of evolution equations.
The generation of order conditions is based on an analysis of the structure of the
leading local error term of such an integrator, and on a new algorithm for the
computation of coefficients of words in expressions involving exponentials. As an application a new 8th order commutator-free Magnus-type integrator involving only 
8 exponentials is derived.
%Insert your abstract here. No references or citations in abstract! 
%Include keywords and mathematical subject classification numbers as needed.
\keywords{Splitting methods \and Magnus-type integrators \and Order conditions
\and Local error \and Graded free Lie algebra \and Lyndon words}
\subclass{65J08 \and 65L05 \and 68R15 \and 68W30}
\end{abstract}

\section{Introduction}

The construction of splitting methods for the numerical integration of evolution equations
like\footnote{For simplicity we assume that here $A$ and $B$ are complex matrices. 
However, the purely formal considerations of this paper are relevant for much more general situations, e.g., using the calculus of Lie derivatives, for nonlinear evolution equations of type $\partial_t u(t)=A(u(t))+B(u(t))$.
}
\begin{equation}\label{eq:split_evolution_eq}
\partial_t u(t) = Au(t)+Bu(t),\quad t\geq 0, \quad u(0)=u_0, 
\quad A,B\in\mathbb{C}^{d\times d},
\end{equation}
or of Magnus-type integrators for non-autonomous evolution equations
\begin{equation}
\partial_t u(t) = A\label{eq:non_auto_evolution_eq}(t)u(t),\quad t\geq 0,\quad u(0)=u_0,\quad A(t)\in\mathbb{C}^{d\times d}
\end{equation}  
has been extensively studied in the literature,
see, e.g., \cite{part1} and the references therein for splitting methods,
and \cite{alvfeh11} and \cite{SergioFernandoMPaper2}  and the references therein for 
Magnus-type integrators.
In this paper we provide a general and unified algebraic framework for the systematic generation of order conditions needed for the construction of such integrators.\footnote{A Maple code for the 
automatic generation of order conditions based on ideas of this paper
 is currently being prepared for publication, see \url{https://github.com/HaraldHofstaetter/Expocon.mpl}.}
It is likely that the ideas and results of this paper can also be adapted 
to other classes of exponential integrators, which, however, are not discussed 
in this paper.

In the remainder of this introduction we give an exemplary overview of our approach,
which is then elaborated in a systematic and purely formal way in 
Section~\ref{sct:theory}. The theoretical considerations of Section~\ref{sct:theory} are then applied to an example of a generalized splitting method in Section~\ref{sct:splitting}, and
to examples of Magnus-type methods in Section~\ref{sct:magnus}. In particular, we report on the construction of a new 8th order commutator-free Magnus-type integrator 
involving only 8 exponentials in Subsection~\ref{subsct:eight}.

\subsection{Coefficients of words in expressions involving exponentials}
\label{intro:coeff}
In our approach to  the generation of order conditions we make use
of  a new algorithm for the efficient computation
of coefficients $c_w$ %=\coeff(w,X)$ 
of words $w\in\nA^*$ over an alphabet $\nA$ in the formal expansion
\begin{equation}
  X=\sum_{w\in\nA^*}c_w w
\end{equation}
of an expression $X$ involving exponentials $\ee^Y$ with exponents $Y\in\mathbb{C}\langle\nA\rangle$ being
polynomials in the non-commuting variables $\in\nA$.
Here, formally, $X$ is an element of $\mathbb{C}\langle\langle\nA\rangle\rangle$, the algebra of formal power series in the non-commuting variables $\in\nA$.
The essential ingredient of this algorithm is a family of algebra homomorphisms $\{\varphi_w: w\in\nA^*\}$ such
that for each word $w\in\nA^*$ of length $\ell(w)$, $\varphi_w(X)$ is an upper triangular matrix 
$\in\mathbb{C}^{(\ell(w)+1)\times(\ell(w)+1)}$ whose entries are coefficients 
of subwords
of $w$ in $X$,
\begin{equation*}
\varphi_w(X)_{i,j} = 
%\varphi_{i,j}^{(w)}), \
%\varphi_{i,j}^{(w)}=
\left\{\begin{array}{ll}
\coeff(w_{i:j-1},X),&\mbox{if $i<j$,}\\
\coeff(\Id,X),&\mbox{if $i=j$,}\\
0,&\mbox{if $i>j$.}
\end{array}\right.
\end{equation*}
Here $w_{i:j-1}=w_iw_{i+1}\cdots w_{j-1}$ denotes the subword of $w$ of length $j-i$, starting at position $i$ and ending at position $j-1$, and $\Id$ denotes the empty word.
Besides compatibility with operations $+$ and $\cdot$ making $\varphi_w$ an algebra homomorphism, $\varphi_w$ is also compatible with exponentiation, $\varphi_w(\ee^Y)=\exp(\varphi_w(Y))$, where
on the right the matrix exponential is exactly computable if $\varphi_w(Y)$ is a strict upper triangular matrix, which is the case if the empty word $\Id$ does not occur
in the expression $Y$.
A recursive application of $\varphi_w$ (the recursion terminates with well-defined values $\varphi_w(a)$ for the ``atoms'' $a\in\nA$) yields $\varphi_w(X)$, from which one can read off the coefficients $c_v$ for all subwords $v$ of $w$. %=\coeff(w,X)$.
A formal justification of this algorithm is provided by our 
Theorem~\ref{thm:III}.
We note that there is some similarity to the algorithms proposed in \cite{Reinsch2000,vanBruntVisser1026} for computing the coefficients 
of the Baker--Campbell--Hausdorff series.
\subsection{Applications to splitting methods}
For example for 
$\nA=\{\AA,\BB\}$, $X=\ee^{\frac{1}{2}\BB}\,\ee^{\AA}\,\ee^{\frac{1}{2}\BB}$, $w=\AA\AA\BB$
we have
\begin{align*}
\varphi_{\AA\AA\BB}(\ee^{\frac{1}{2}\BB}\,\ee^{\AA}\,\ee^{\frac{1}{2}\BB})
&=\exp\big(\tfrac{1}{2}\varphi_{\AA\AA\BB}(\BB)\big)\cdot\exp\big(\varphi_{\AA\AA\BB}(\AA)\big)\cdot\exp\big(\tfrac{1}{2}\varphi_{\AA\AA\BB}(\BB)\big)\\
&=\exp\left(\begin{array}{cccc}
0 & 0 & 0 & 0\\
0  & 0 & 0  & 0  \\
0 & 0 & 0 & \tfrac{1}{2} \\
0 & 0 & 0 & 0
\end{array}
\right)\cdot
\exp\left(\begin{array}{cccc}
0 & 1 & 0 & 0\\
0  & 0 & 1 & 0 \\
0 & 0 & 0 & 0 \\
0 & 0 & 0 & 0
\end{array}
\right)\cdot
\exp\left(\begin{array}{cccc}
0 & 0 & 0 & 0\\
0  & 0 &0 & 0  \\
0 & 0 & 0 & \frac{1}{2} \\
0 & 0 & 0 & 0
\end{array}
\right)\\
&=\left(\begin{array}{cccc}
1 & 0 & 0 & 0\\
0  & 1 &0  & 0  \\
0 & 0 & 1 &\frac{1}{2}   \\
0 & 0 & 0 & 1
\end{array}
\right)\cdot
\left(\begin{array}{cccc}
1 & 1 & \frac{1}{2} & 0\\
0  & 1 & 1 & 0 \\
0 & 0 & 1 & 0 \\
0 & 0 & 0 & 1
\end{array}
\right)\cdot
\left(\begin{array}{cccc}
1 & 0 & 0 & 0\\
0  & 1 &0  & 0  \\
0 & 0 & 1 &\frac{1}{2}   \\
0 & 0 & 0 & 1
\end{array}
\right)\\
&=\left(\begin{array}{cccc}
1 & 1 & \frac{1}{2} & \frac{1}{4}\\
0 & 1 & 1 & \frac{1}{2}\\
0 & 0 & 1 & 1 \\
0 & 0 & 0 & 1
\end{array}\right)
=\left(\begin{array}{cccc}
c_\Id & c_\AA & c_{\AA\AA} & c_{\AA\AA\BB}\\
0 & c_\Id & c_\AA & c_{\AA\BB}\\
0 & 0 & c_\Id & c_\BB \\
0 & 0 & 0 & c_\Id
\end{array}\right),
\end{align*}
from which we read off
\begin{align*}
\ee^{\frac{1}{2}\BB}\,\ee^{\AA}\,\ee^{\frac{1}{2}\BB}
&=c_{\Id}\Id +c_{\AA}\AA+c_{\BB}\BB+c_{\AA\AA}\AA\AA+c_{\AA\BB}\AA\BB+c_{\AA\AA\BB}\AA\AA\BB+\dots\\
&=\Id +\AA+\BB+\tfrac{1}{2}\AA\AA+\tfrac{1}{2}\AA\BB+\tfrac{1}{4}\AA\AA\BB+\dots\,.
\end{align*}
Similar calculations involving $\varphi_w(\ee^{\frac{1}{2}\BB}\,\ee^{\AA}\,\ee^{\frac{1}{2}\BB})$ for $w\in\{\AA\AA\AA$, $\AA\AA\BB$, $\AA\BB\AA$, $\BB\AA\AA$, $\AA\BB\BB$, $\BB\AA\BB$, $\BB\BB\AA$, $\BB\BB\BB\}$
yield
\begin{align*}
\ee^{\frac{1}{2}\BB}\,\ee^{\AA}\,\ee^{\frac{1}{2}\BB}
&=\Id +\AA+\BB+\tfrac{1}{2}\AA\AA+\tfrac{1}{2}\AA\BB+\tfrac{1}{2}\BB\AA+\tfrac{1}{2}\BB\BB\\
&\quad+\tfrac{1}{6}\AA\AA\AA
+\tfrac{1}{4}\AA\AA\BB+\tfrac{1}{4}\BB\AA\AA
+\tfrac{1}{8}\AA\BB\BB+\tfrac{1}{4}\BB\AA\BB+\tfrac{1}{8}\BB\BB\AA
+\tfrac{1}{6}\BB\BB\BB
+\dots\,,
\end{align*}
where the dots represent terms involving words of length greater than three.
By repeating these considerations for the expression $X=\ee^{\AA+\BB}$, e.g.,
\begin{equation*}
\varphi_{\AA\AA\BB}(\ee^{\AA+\BB})
=\exp\big(\varphi_{\AA\AA\BB}(\AA)+\varphi_{\AA\AA\BB}(\BB)
\big)
=\exp\left(\begin{array}{cccc}
0 & 1 & 0 & 0\\
0 & 0 & 1 & 0\\
0 & 0 & 0 & 1 \\
0 & 0 & 0 & 0
\end{array}\right)
=\left(\begin{array}{cccc}
1 & 1 & \frac{1}{2} & \frac{1}{6}\\
0 & 1 & 1 & \frac{1}{2}\\
0 & 0 & 1 & 1 \\
0 & 0 & 0 & 1
\end{array}\right),
\end{equation*}
we obtain
\begin{align*}
\ee^{\AA+\BB}
&=\Id +\AA+\BB+\tfrac{1}{2}\AA\AA+\tfrac{1}{2}\AA\BB+\tfrac{1}{2}\BB\AA+\tfrac{1}{2}\BB\BB\\
&\quad+\tfrac{1}{6}\AA\AA\AA
+\tfrac{1}{6}\AA\AA\BB+\tfrac{1}{6}\AA\BB\AA+\tfrac{1}{6}\BB\AA\AA
+\tfrac{1}{6}\AA\BB\BB+\tfrac{1}{6}\BB\AA\BB+\tfrac{1}{6}\BB\BB\AA
+\tfrac{1}{6}\BB\BB\BB
+\dots\,,
\end{align*}
and thus
\begin{align}
\ee^{\frac{1}{2}\BB}\,\ee^{\AA}\,\ee^{\frac{1}{2}\BB}-\ee^{\AA+\BB}
&=\tfrac{1}{12}\AA\AA\BB-\tfrac{1}{6}\AA\BB\AA+\tfrac{1}{12}\BB\AA\AA
-\tfrac{1}{24}\AA\BB\BB+\tfrac{1}{12}\BB\AA\BB-\tfrac{1}{24}\BB\BB\AA
+\dots\nonumber\\
&=\tfrac{1}{12}[\AA,[\AA,\BB]]-\tfrac{1}{24}[[\AA,\BB]\BB]+\dots\,.
\label{eq:strang_err_abstract}
\end{align}
Here all terms involving words of length less than three cancel,
and  the words of length three can be combined to a homogeneous Lie element, i.e., a linear combination of commutators of length  three in $[\mathbb{C}\langle\AA,\BB\rangle]$, the free Lie algebra generated by $\AA$ and $\BB$.

We can interpret (\ref{eq:strang_err_abstract}) as a result 
for Strang splitting, an example of a splitting method for the numerical solution of evolution equations of 
type (\ref{eq:split_evolution_eq}),
where one step
\begin{equation}\label{eq:step_split}
u_n\mapsto u_{n+1}=\nS(\tau)u_n
\end{equation}
of step-size $\tau$ is defined by
$$\nS(\tau)=\ee^{\frac{1}{2}\tau B}\,\ee^{\tau A}\,\ee^{\frac{1}{2}\tau B}.$$
By substituting $\AA\to\tau A$, $\BB\to\tau B$ in (\ref{eq:strang_err_abstract}) it follows
that the local error satisfies
$$\nL(\tau)=\nS(\tau)-\nE(\tau)=\tau^3\big(\tfrac{1}{12}[A,[A,B]]-\tfrac{1}{24}[[A,B],B]\big)\ +\Order(\tau^4),$$
where $\nE(\tau)=\ee^{\tau(A+B)}$ is the exact local solution 
operator.\footnote{Symbols $\AA$, $\BB$, etc.\ written in typewriter
font denote purely abstract objects. In applications of  formal
results to concrete situations we use corresponding symbols $A$, $B$, etc.~
}
This result can be generalized: 
the leading term in the local error is a homogeneous Lie element of order 
$\Order(\tau^{p+1})$ for any splitting method $\nS(\tau)$ of order $p$ 
and even for ``generalized'' splitting methods
like the fourth order method
\begin{equation}\label{eq:gs4}
\nS(\tau)=\ee^{\frac{1}{6}\tau B}\,\ee^{\frac{1}{2}\tau A}\,\ee^{\frac{2}{3}\tau B+\frac{1}{72}\tau^3[B,[A,B]]}
	\,\ee^{\frac{1}{2}\tau A}\,\ee^{\frac{1}{6}\tau B}
\end{equation}	
proposed in \cite{suzuki95,chin97}. %, see our Theorem~\ref{thm:I}.
For splitting methods this result was proved in \cite[Theorem~2.6]{auzingeretal13c}. 
Our much more general Theorem~\ref{thm:I} covers also the case of 
generalized splitting methods (and Magnus-type methods, see Section~\ref{intro:magnus}
below).

For the concrete calculation of the leading error term one has to compute
the coefficients of all Lyndon words of length $p+1$ in $\nL=\nS-\nE$
(whose number is significantly smaller than that of all words of length $p+1$), and then
apply an easily computable transformation matrix to these coefficients to get the leading error term represented in the Lyndon basis of the subspace generated by the commutators of length $p+1$. 
The representation of the leading error term as a homogeneous Lie element
together with this relation between Lyndon words and Lyndon basis elements
leads to an effective procedure for the generation of minimal sets of order conditions for (generalized)
splitting methods. We illustrate this with an example. Let
$$\nS=\ee^{b_3\BB}\,\ee^{a_3\AA}\,\ee^{b_2\BB}\,\ee^{a_2\AA}\,\ee^{b_1\BB}\,\ee^{a_1\AA},\quad \AA=\tau A, \ \BB=\tau B$$
be a splitting method with parameters $a_1, a_2, a_3, b_1, b_2, b_3$ to be determined such that $\nS$ has order $p=3$. We calculate the
coefficients $c_w$ of all Lyndon words $w$ of length less than four in 
$\nL=\nS-\ee^{\AA+\BB}$,
\begin{align*}
c_{\AA}&= a_1+a_2+a_3-1,\\
c_{\BB}&=b_1+b_2+b_3-1,\\
c_{\AA\BB}&=a_2b_1+a_3b_1+a_3b_2-\tfrac{1}{2},\\
c_{\AA\AA\BB}&=\tfrac{1}{2}a_2^2 b_1+\tfrac{1}{2}a_3^2 b_1+\tfrac{1}{2}a_3^2 b_2+a_2a_3b_1-\tfrac{1}{6},\\
c_{\AA\BB\BB}&=\tfrac{1}{2}a_2 b_1^2+\tfrac{1}{2}a_3 b_1^2+\tfrac{1}{2}
a_3b_2^2+a_3b_1b_2-\tfrac{1}{6}.
\end{align*}
If $c_{\AA}=0$ and $c_{\BB}=0$, then $\nS$ has order one and it holds
$\nL=c_{[\AA,\BB]}[\AA,\BB]+\Order(\tau^3)$. 
Because $c_{\AA\BB}$ is the coefficient of $\AA\BB$ in $\nL$ and thus in $c_{[\AA,\BB]}[\AA,\BB]$, it follows  $c_{[\AA,\BB]}=c_{\AA\BB}$. 
Therefore, if $c_{\AA}=0$, $c_{\BB}=0$, and $c_{\AA\BB}=0$, then $\nS$ has order two
and it holds 
$\nL=c_{[\AA,[\AA,\BB]]}[\AA,[\AA,\BB]]+c_{[[\AA,\BB],\BB]}[[\AA,\BB],\BB]+\Order(\tau^4)$.
Because $c_{\AA\AA\BB}$ is the coefficient of $\AA\AA\BB$ in $\nL$ and thus in
$c_{[\AA,[\AA,\BB]]}[\AA,[\AA,\BB]]+c_{[[\AA,\BB],\BB]}[[\AA,\BB],\BB]$ it follows
$c_{[\AA,[\AA,\BB]]}=c_{\AA\AA\BB}$. Similarly, $c_{[[\AA,\BB],\BB]}=c_{\AA\BB\BB}$.
Together, we have 5 order conditions
$c_{\AA}=0$, $c_{\BB}=0$, $c_{\AA\BB}=0$, $c_{\AA\AA\BB}=0$, $c_{\AA\BB\BB}=0$
for the 6 parameters $a_1, a_2, a_3, b_1, b_2, b_3$ which, if satisfied, ensure that
$\nS$ has order 3.\footnote{To choose a ``good'' solution from the one-dimensional
solution manifold of this system of equations,
one can minimize the
``local error measure''
$\sqrt{c_{\AA\AA\AA\BB}^2+c_{\AA\AA\BB\BB}^2+c_{\AA\BB\BB\BB}^2}$ made up from the
coefficients of all Lyndon words of length four in $\nL$, see 
\cite[Section~4]{part1}.}
A general version of this procedure for the generation of order conditions is provided
by our  Theorem~\ref{thm:order_conditions}.
A similar procedure was already proposed in \cite{part1}, see also
\cite{auzingeretal13c} and \cite[Section~7]{auzingeretal16b}. However, 
our Theorem~\ref{thm:order_conditions} is much more general and, furthermore, our
method for computing coefficients of Lyndon words (see Section~\ref{intro:coeff}) is
much more efficient than the one proposed in these references.

\subsection{Applications to Magnus-type methods}
\label{intro:magnus}
Our considerations about %(generalized) 
splitting methods
%for evolution equations of type (\ref{eq:split_evolution_eq}) 
can be transferred to
Magnus-type integrators for the numerical solution of
non-autonomous evolution equations (\ref{eq:non_auto_evolution_eq}).
Following \cite[Section~3]{alvfeh11} we expand $A(t)$ locally on an interval $[t_n, t_n+\tau]$ of length $\tau>0$ into a series of 
Legendre polynomials shifted to $[0, \tau]$,
\begin{equation}\label{eq:A_expansion}
A(t_n+t)=A_1\tilde{P}_0(t)+A_2\tilde{P}_1(t)+A_3\tilde{P}_2(t)+\dots,\quad t\in[0,\tau],
\end{equation}
where
\begin{equation}\label{eq:legendre_zeugs}
\tilde P_k(t) = \frac{1}{\tau}P_{k}\left(\frac{t}{\tau}\right),\quad
P_{k}(x)=(-1)^k\sum_{j=0}^k{k \choose j}{k+j \choose j}(-1)^j x^j.
\end{equation}
%Here the explicit expression for the Legendre polynomials $P_k(x)$ will be needed later.
The matrix-valued coefficients $A_1, A_2, A_3,\dots$ are defined as
\begin{equation}\label{eq:A_integral}
A_k%=(2k-1)\tau\int_0^\tau A(t_n+t)\tilde{P}_{k-1}(t)\,\mathrm{d}t
=(2k-1)\tau\int_0^1 P_{k-1}(x)A(t_n+\tau x)\,\mathrm{d}x,
\end{equation}
they depend on both $t_n$ and $\tau$, and it holds
\begin{equation} \label{eq:legendre_coeffs}
A_k=\Order(\tau^k).
\end{equation}
In practice they are approximately calculated using Gaussian quadrature, see \cite[Section~7]{alvfeh11}.\footnote{The expansion into Legendre polynomials
proves to be very convenient. For some theoretical considerations, however,  other expansions may be
more suitable, e.g., Taylor expansion around $t_n$ or around 
the midpoint $t_n+\tfrac{1}{2}\tau$ of $[t_n,t_n+\tau]$, 
where respectively $P_k(x)=x^k$ or $P_k(x)=(x-\tfrac{1}{2})^k$ instead 
of (\ref{eq:legendre_zeugs}). Our considerations carry over to these cases
(with  modified representations of the Magnus series $\Omega$, of course),
provided that the expansion coefficients satisfy (\ref{eq:legendre_coeffs}).
}
One step of step-size $\tau$ 
of a Magnus-type integrator of order $p$
can be written as
\begin{equation}\label{eq:non_auto_step}
t_n\mapsto t_{n+1}=t_n+\tau,\quad u_n\mapsto u_{n+1}=\nS(\tau, t_n)u_n,
\end{equation} 
where $\nS(\tau, t_n)$ is an approximation of 
the exact local solution operator, 
\begin{equation*}
\nS(\tau, t_n)=\nE(\tau, t_n) +\Order(\tau^{p+1})= \ee^{\Omega}+\Order(\tau^{p+1}),
\end{equation*}
where $\Omega$ denotes the {\em Magnus} series
\begin{eqnarray}
	\Omega &=&  A_1-\tfrac{1}{6}[A_1,A_2]+\tfrac{1}{60}[A_1,[A_1,A_3]]-\tfrac{1}{60}[A_2,[A_1,A_2]]\nonumber\\
	&&+\tfrac{1}{360}[A_1,[A_1,[A_1,A_2]]]-\tfrac{1}{30}[A_2,A_3]%-\frac{1}{70}[A3,A4] A+ 
	+\dots
	\label{eq:magnus_series}
\end{eqnarray}	 
in terms of the Legendre expansion coefficients $A_k$, see \cite[Section~3.2]{alvfeh11}.
We give some prototypical examples of Magnus-type integrators:
\begin{itemize} 
\item
Classical fourth order Magnus integrator, obtained by truncating the Magnus series $\Omega$ in $\ee^\Omega$:
\begin{equation*}
\nS(\tau, t_n)=\ee^{A_1-\frac{1}{6}[A_1,A_2]}.
\end{equation*}
\item
Fourth order commutator-free integrator \cite[eq.~(38)]{alvfeh11}:
\begin{equation}\label{eq:comfree4}
\nS(\tau, t_n)=\ee^{\frac{1}{2}A_1+\frac{1}{3}A_2}\,\ee^{\frac{1}{2}A_1-\frac{1}{3}A_2}.
\end{equation}
\item
The following %``unconventional'' 
scheme of order six %for the solution of (\ref{eq:non_auto_evolution_eq}) 
involving  one commutator in the middle exponential was
 proposed in \cite{SergioFernandoMPaper2}:
\begin{align}
\nS(\tau,t_n)=&\
\ee^{f_{11}A_1-f_{12}A_2+f_{13}A_3}\,
\ee^{f_{21}A_1-f_{22}A_2+f_{23}A_3}%\nonumber\\
\,\,\ee^{[g_1A_1+g_3A_3,A_2]}\nonumber\\
&\times\,\ee^{f_{21}A_1+f_{22}A_2+f_{23}A_3}\,
\ee^{f_{11}A_1+f_{12}A_2+f_{13}A_3}\label{eq:unconv_scheme}
\end{align}
with coefficients
\begin{align}
(f_{jk}) \doteq\ & \left({\fontsize{8.5}{9.}\selectfont\begin{array}{rrr}
 0.166598694406302053& 
-0.150420414495444186& 
 0.119990212792817809\\ 
 0.333401305593697947& 
-0.127503033859797053& 
-0.119990212792817809\end{array}}\right),\nonumber\\
 g_1\doteq\ & 0.001203581117795540,\quad g_3\doteq -0.000014760374925774. 
 \label{eq:coeff_unconv}
\end{align}
\end{itemize}
For such methods $\nS$
the leading term in the local error $\nL=\nS-\nE$ is again a
homogeneous Lie element of order $\Order(\tau^{p+1})$, 
our general
Theorem~\ref{thm:I} covers this case, too.
To calculate a
representation of the leading local error term  in the Lyndon basis,
analogously as in the case of splitting methods,
one  has to compute the coefficients of all 
Lyndon words of order $\Order(\tau^{p+1})$ over the alphabet $\{\AA_1,\AA_2,\dots\}$.
The coefficient of a Lyndon word $w$ in $\nL=\nS-\nE$ is the difference of the
coefficient of $w$ in $\nS$ to be computed using the algorithm of Section~\ref{intro:coeff} and the coefficient of $w$ in $\nE=\ee^{\Omega}$, 
which in principle can also be computed by this algorithm, provided the terms of the Magnus series $\Omega$ in (\ref{eq:magnus_series}) are available up to order $\Order(\tau^{p+1})$.
However, our Theorem~\ref{eq:coeff_rhs_magnus_type} provides an explicit formula for
this coefficient, so that explicit knowledge of the Magnus series  is not
necessary. 
These considerations again lead to a procedure for the generation of order conditions for Magnus-type integrators, similar to the one for splitting methods, see Theorem~\ref{thm:order_conditions}.

\section{Theoretical considerations}\label{sct:theory}
\subsection{Algebraic setting} 
We consider the free Lie algebra  $\gg=[\mathbb{C}\langle\nA\rangle]$ over an appropriate set of generators $\nA$.\footnote{Here 
``free'' means that we consider the generic case. In particular, we do not assume that there hold any relations between Lie elements except those which follow from the axioms defining a Lie algebra. 
}
A grading function specified by its values 
on $\nA$ will  turn $\gg$ into a graded Lie algebra, see \cite{MuntheKaas957}.
\begin{itemize}
\item
For the study of splitting methods 
we set  $\nA=\{\AA, \BB\}$ were $\AA, \BB$ represent
$\tau A, \tau B$, respectively. Corresponding to
$\AA \simeq  \tau A =\Order(\tau)$ and $\BB \simeq \tau B =\Order(\tau)$ we define
\begin{equation}\label{eq:grade_gen_split}
  \grade(\AA)=\grade(\BB)=1.
\end{equation}
\item
For Magnus-type integrators we set  $\nA=\{\AA_1\dots, \AA_K\}$
where  $K\geq 2$ depends on the particular scheme, and the $\AA_k$ represent the Legendre coefficients $A_k$ from (\ref{eq:A_expansion}). Corresponding to  (\ref{eq:legendre_coeffs})
we define
\begin{equation}\label{eq:grade_gen_magnus_type}
 \grade(\AA_k) = k.
\end{equation}
\end{itemize}
We call iterated commutators $\in\gg$ with single generators $\in\nA$ in their slots, {\em pure} elements of $\gg$, or more formally: (i) all elements of the generating set $\nA$ are pure;
(ii) if $X,Y\in\gg$ are pure, then the commutator $[X,Y]$ is pure;
(iii)  elements of $\gg$ which can not be constructed by (i) or (ii) are not pure.

We define a grading function recursively for pure elements by (\ref{eq:grade_gen_split})
or (\ref{eq:grade_gen_magnus_type}) and
$$
\grade([X,Y]) = \grade(X)+\grade(Y),\quad X,Y\ \mbox{pure}.
$$
With
$$
\gg_k = \mathrm{span}\{X\in\gg:\ X\ \mbox{pure and}\ \grade(X)=k \}
$$
$\gg$ becomes a {\em graded} Lie algebra
$$
\gg=\bigoplus_{k=1}^\infty \gg_k.
$$
Each $\Phi\in\gg$, $\Phi\neq 0$ has a unique representation as a {\em finite} sum $\Phi=X_1+\ldots+X_q$ with
$X_k\in\gg_k$, $X_q\neq 0$.
Elements of $\gg_k$ are called homogeneous Lie elements of grade $k$.
Because there are only finitely many pure elements of fixed grade $k$, each
$\gg_k$ is finite-dimensional.

The universal enveloping algebra of $\gg$ is given by $\mathbb{C}\langle\nA\rangle\supset\gg$,
the algebra of polynomials in the non-commuting variables $\nA$ or, equivalently, the free associative
algebra generated by $\nA$. 
Values of exponentials of Lie elements $\in\gg$ are in a natural way  elements of
$\mathbb{C}\langle\langle\nA\rangle\rangle\supset\mathbb{C}\langle\nA\rangle$,
the algebra of formal power series in the non-commuting variables $\nA$.

We extend the grading function
(\ref{eq:grade_gen_split}),
 (\ref{eq:grade_gen_magnus_type})
 to words $w=w_1\cdots w_{\ell(w)}\in\nA^*\subset\mathbb{C}\langle\nA\rangle$ 
 with
$w_j\in\nA$, $j=1,\dots,\ell(w)=\mbox{length}(w)$,
$$
\grade(w) = \sum_{j=1}^{\ell(w)} \grade(w_j).
$$
Analogously as before, with
$$
V_k = \mathrm{span}\{w\in\nA^*:\ \grade(w)=k \}\subset\mathbb{C}\langle\nA\rangle,
$$
$\mathbb{C}\langle\nA\rangle$ becomes a {\em graded}  algebra
$$
\mathbb{C}\langle\nA\rangle=\bigoplus_{k=0}^\infty V_k.
$$
Here the direct sum starts with $V_0=\mathrm{span}\{\Id\}$, where $\Id$ is the
empty word with $\grade(\Id)=0$, which serves as the multiplicative identity of the algebra $\mathbb{C}\langle\nA\rangle$.

For $q\geq 1$ the subspace
$$\widetilde\nR_q=
  \mathrm{span}\{w\in\nA^* :\  \grade(w)\geq q \}
 = \bigoplus_{k=q}^\infty V_k
$$
is an ideal of $\mathbb{C}\langle\nA\rangle$. We define
$$
\nR_q = \text{the ideal of } \mathbb{C}\langle\langle\nA\rangle\rangle \text{ generated by } \widetilde\nR_q,
$$
which consists of all  series with terms in $\widetilde\nR_q$.
With  $\nR_q$ we have a substitute in  $\mathbb{C}\langle\langle\nA\rangle\rangle$
for computations up to order $q$,
mnemonically
$$
\nR_q \simeq \Order(\tau^q).
$$

\begin{lemma}\label{lemma:0}
Let $\Phi=X_1+\dots +X_q$, $X_k\in\gg_k$, 
$\Psi=Y_1+\dots +Y_q$, $Y_k\in\gg_k$, and $R_1,R_2,R_3\in\nR_{q+1}$.
If 
\begin{equation}\label{eq:lemma0vor}
\ee^{\Phi+R_1}=\ee^{\Psi+R_2}+R_3,
\end{equation}
then
$$ \Phi=\Psi.$$
\end{lemma}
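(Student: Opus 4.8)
The plan is to work modulo the ideal $\nR_{q+1}$, reduce the hypothesis to a statement about the exponentials of the Lie elements alone, and then invert the exponential to compare $\Phi$ and $\Psi$ grade by grade. The two facts that drive everything are that the perturbations $R_1,R_2,R_3$ all lie in $\nR_{q+1}$, and that both $\Phi$ and $\Psi$ have no homogeneous component of grade exceeding $q$.

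First I would observe that $\Phi+R_1$ has vanishing constant term, since every term of $\Phi$ has grade $\geq 1$ and every term of $R_1$ has grade $\geq q+1$; hence $\ee^{\Phi+R_1}=\sum_{n\geq 0}\tfrac{1}{n!}(\Phi+R_1)^n$ is a well-defined element of $\mathbb{C}\langle\langle\nA\rangle\rangle$, and likewise for $\ee^{\Psi+R_2}$. Expanding each power $(\Phi+R_1)^n$ and isolating the single summand $\Phi^n$, every remaining summand contains at least one factor $R_1$ and therefore lies in $\nR_{q+1}$, because $\nR_{q+1}$ is an ideal. Summing over $n$ gives $\ee^{\Phi+R_1}\equiv\ee^{\Phi}$ and $\ee^{\Psi+R_2}\equiv\ee^{\Psi}$ modulo $\nR_{q+1}$. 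Since $R_3\in\nR_{q+1}$, the hypothesis (\ref{eq:lemma0vor}) then collapses to
$$\ee^{\Phi}\equiv\ee^{\Psi}\pmod{\nR_{q+1}}.$$

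Next I would invert the exponential. In the finite-dimensional quotient algebra $\mathbb{C}\langle\langle\nA\rangle\rangle/\nR_{q+1}\cong\bigoplus_{k=0}^{q}V_k$ the image of the augmentation ideal $\bigoplus_{k=1}^{q}V_k$ is nilpotent, since any product of $q+1$ of its elements has grade $\geq q+1$ and so vanishes in the quotient; on a nilpotent ideal the truncated series $\exp$ and $\log$ are mutually inverse polynomial maps. Applying $\log$ to the congruence above therefore yields $\Phi\equiv\Psi\pmod{\nR_{q+1}}$. Finally, $\Phi-\Psi=\sum_{k=1}^{q}(X_k-Y_k)$ lies in $\bigoplus_{k=1}^{q}V_k$, whereas $\nR_{q+1}=\bigoplus_{k\geq q+1}V_k$; these subspaces meet only in $0$, so $\Phi-\Psi=0$, i.e.\ $X_k=Y_k$ for all $k$ and $\Phi=\Psi$.

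The main obstacle is the justification, in the previous paragraph, that $\ee^{\Phi}\equiv\ee^{\Psi}\pmod{\nR_{q+1}}$ forces $\Phi\equiv\Psi\pmod{\nR_{q+1}}$, i.e.\ the injectivity of $\exp$ on the augmentation ideal after reduction; everything else is a routine use of the ideal property of $\nR_{q+1}$ and of the grading. This injectivity is exactly the statement that $\log$ is a two-sided inverse of $\exp$ on formal series with constant term $1$, compatible with the filtration by the $\nR_k$. If one prefers to avoid invoking $\log$, the same conclusion follows by induction on $k$, comparing the grade-$k$ homogeneous components of $\ee^{\Phi}$ and $\ee^{\Psi}$ and using that the grade-$k$ part of $\ee^{\Phi}$ equals $X_k$ plus a (noncommutative) polynomial in $X_1,\dots,X_{k-1}$ only.
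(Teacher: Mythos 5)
Your proof is correct, and its core mechanism differs from the paper's. Both arguments begin the same way: you use the ideal property of $\nR_{q+1}$ to strip off $R_1,R_2,R_3$, reducing the hypothesis to $\ee^{\Phi}\equiv\ee^{\Psi} \pmod{\nR_{q+1}}$ (the paper does this by expanding and collecting everything of grade $\geq q+1$ into a single remainder $R$). The difference lies in how the exponential is inverted. The paper works directly with the identity $0=\Phi-\Psi+\tfrac{1}{2}(\Phi^2-\Psi^2)+\ldots+\tfrac{1}{q!}(\Phi^q-\Psi^q)+R$ in the decomposition $\bigoplus_{k=0}^{q}V_k\oplus\nR_{q+1}$ and compares homogeneous components grade by grade: the $V_1$ component gives $X_1=Y_1$, which is then fed back in to show the $V_2$ component is $X_2-Y_2$, and so on up to grade $q$. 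You instead pass to the quotient algebra $\mathbb{C}\langle\langle\nA\rangle\rangle/\nR_{q+1}$, observe that the image of the augmentation ideal is nilpotent there, and invoke the standard fact that $\exp$ and $\log$ are mutually inverse on a nilpotent ideal, so $\exp$ is injective and $\Phi\equiv\Psi$ follows at once; the grading then forces $\Phi=\Psi$. Your route is more conceptual and modular — injectivity of $\exp$ is isolated as a reusable fact, and the final step is a clean direct-sum argument. The paper's route is more elementary and self-contained: it needs no quotient construction and no appeal to the formal $\exp$/$\log$ inversion, and its bootstrapping style matches the inductive arguments used later (e.g., in the proof of the order-conditions theorem). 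Note that the fallback you sketch in your final paragraph — comparing grade-$k$ components and using that the grade-$k$ part of $\ee^{\Phi}$ is $X_k$ plus a polynomial in $X_1,\dots,X_{k-1}$ — is essentially the paper's proof verbatim. One small notational caveat: writing $\nR_{q+1}=\bigoplus_{k\geq q+1}V_k$ should be understood as the completed sum (all formal series with terms of grade $\geq q+1$), which is how the paper defines it; with that reading your intersection argument is exactly right.
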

\begin{proof} Expanding and rearranging
(\ref{eq:lemma0vor})
yields
$$0=\Phi-\Psi+\tfrac{1}{2}(\Phi^2-\Psi^2)+\ldots+\tfrac{1}{q!}(\Phi^q-\Psi^q)+R\in\mathbb{C}\langle\langle\nA\rangle\rangle=
\bigoplus_{k=0}^q V_k\oplus\nR_{q+1}$$ for some $R\in\nR_{q+1}$.
Here the component in $V_1$ is $X_1-Y_1$, thus $X_1=Y_1$.
Then using $X_1=Y_1$ it is easy to see that the component in
$V_2$ is $X_2-Y_2$, thus $X_2=Y_2$. 
Continuing in this way we obtain $X_3=Y_3$, \dots, $X_q=Y_q$. \qed
\end{proof}
\begin{lemma}\label{lemma:I}
Let $\Phi_1,\dots,\Phi_J\in\gg$. %=[\mathbb{C}\langle\nA\rangle]$. 
Then there exists a sequence $X_k\in\gg_k$, $k=1,2,\dots$ such that
for all $q\geq 1$
\begin{equation*}
\ee^{\Phi_J}\cdots\ee^{\Phi_1}=\ee^{\Psi_q} + R_{q+1},\quad
\Psi_q=X_1+\ldots+X_q, \ R_{q+1}\in \nR_{q+1}.
\end{equation*}
\end{lemma}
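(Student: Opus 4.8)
The plan is to reduce the product of exponentials to a single exponential of an element of the completed free Lie algebra, and then to obtain each truncated exponent $\Psi_q$ by simply discarding the homogeneous components of grade $>q$. Write $\widehat{\gg}=\prod_{k\ge1}\gg_k$ for the graded completion of $\gg$; its elements are the (possibly infinite) series $\sum_{k\ge1}X_k$ with $X_k\in\gg_k$, and their exponentials are well-defined elements of $\mathbb{C}\langle\langle\nA\rangle\rangle$ since such series have no constant term, so that the exponential series converges formally.

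First I would prove, by induction on $J$, that $\ee^{\Phi_J}\cdots\ee^{\Phi_1}=\ee^{Z}$ for some $Z\in\widehat{\gg}$. The inductive step rests on the Baker--Campbell--Hausdorff theorem: for $U,V\in\widehat{\gg}$ one has $\ee^{U}\ee^{V}=\ee^{\mathrm{BCH}(U,V)}$, where $\mathrm{BCH}(U,V)$ is the usual Lie series $U+V+\tfrac12[U,V]+\cdots$. The point that requires the grading is that $\mathrm{BCH}(U,V)$ really lies in $\widehat{\gg}$: expanding each argument into its homogeneous components and using that every slot of an iterated commutator is filled by an element of grade $\ge1$, a bracket contributing to grade $k$ has at most $k$ slots, so only finitely many BCH terms contribute to each $\gg_k$. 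Hence the grade-$k$ component $X_k$ of $Z=\mathrm{BCH}(\Phi_J,\mathrm{BCH}(\Phi_{J-1},\dots))$ is a finite sum of iterated commutators, i.e.\ a genuine element of $\gg_k$.

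Given such a $Z=\sum_{k\ge1}X_k$, I would set $\Psi_q=X_1+\dots+X_q$. Then $Z-\Psi_q=\sum_{k>q}X_k$ has all its terms of grade $\ge q+1$, so it lies in $\nR_{q+1}$. Writing $\ee^{Z}-\ee^{\Psi_q}=\sum_{n\ge1}\tfrac1{n!}\big((\Psi_q+(Z-\Psi_q))^n-\Psi_q^n\big)$ and noting that every summand on the right contains at least one factor from the ideal $\nR_{q+1}$, the whole difference lies in $\nR_{q+1}$. Thus $R_{q+1}:=\ee^{Z}-\ee^{\Psi_q}\in\nR_{q+1}$ and $\ee^{\Phi_J}\cdots\ee^{\Phi_1}=\ee^{\Psi_q}+R_{q+1}$, as required; note that the \emph{same} sequence $(X_k)$ serves all $q$, since it is read off once and for all from $Z$.

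The main obstacle is the middle step, namely the Lie-algebraic fact that the logarithm of a product of exponentials of Lie elements is again a Lie element (Friedrichs' criterion, equivalently convergence of the BCH series in the graded completion). This is exactly what guarantees that the grade-$q$ part of the discrepancy $\ee^{\Phi_J}\cdots\ee^{\Phi_1}-\ee^{\Psi_{q-1}}$ can be absorbed by an $X_q$ lying in $\gg_q$ rather than merely in $V_q$; without it one would only obtain a single-exponential representation with a polynomial, not Lie, exponent. Everything else is bookkeeping with the grading and the ideals $\nR_q$. Incidentally, Lemma~\ref{lemma:0} is not needed for existence, but it shows the sequence $(X_k)$ is uniquely determined, which is the natural complement if one instead builds the $X_k$ up one grade at a time.
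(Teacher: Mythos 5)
Your proof is correct, but it is organized differently from the paper's. The paper applies the BCH formula only to pairs of \emph{polynomial} Lie elements (the $J=2$ case, with $\Phi_J\in\gg$ and the already-truncated $\Psi_q\in\gg$), runs the induction on $J$ separately for each fixed truncation level $q$, and thus obtains components $X_k^{(q)}$ that a priori depend on $q$; it then needs Lemma~\ref{lemma:0} to conclude that these are consistent across $q$, so that a single sequence $(X_k)$ serves all truncations. You instead pass to the graded completion $\widehat{\gg}=\prod_{k\geq 1}\gg_k$, prove the exact identity $\ee^{\Phi_J}\cdots\ee^{\Phi_1}=\ee^{Z}$ with $Z\in\widehat{\gg}$, and read off the whole sequence $(X_k)$ once from $Z$, truncating only at the very end; this makes the $q$-independence automatic and, as you note, removes any reliance on Lemma~\ref{lemma:0} for existence (it survives only as a uniqueness statement). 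The price you pay is that your inductive step applies BCH with one argument an \emph{infinite} series in $\widehat{\gg}$, so you must justify that $\mathrm{BCH}(U,V)$ is well defined there; your grading argument (a bracket contributing to $\gg_k$ has at most $k$ slots, each of grade $\geq 1$, so each homogeneous component of the BCH series is a finite sum in $\gg_k$) does exactly this, and the finite-dimensionality of the $V_k$ makes all the formal manipulations legitimate. In short: the paper keeps every exponent polynomial at the cost of a patching lemma; you get a stronger intermediate statement (a genuine single-exponential representation in the completion) at the cost of setting up the completion and the completed BCH. Your final truncation step, writing $\ee^{Z}-\ee^{\Psi_q}$ as a formally convergent sum of terms each containing a factor from the ideal $\nR_{q+1}$, matches the corresponding computation in the paper's proof.
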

\begin{proof}
For $J=1$ the statement of the lemma holds trivially.
 
Next we give the proof for the case $J=2$. 
By the Baker-Campbell-Hausdorff (BCH) formula (see, e.g., \cite[Section~2.8]{BLANES2009151})
there exist $X_k\in\gg_k$, $k=1,2,\dots$ with
$$
\ee^{\Phi_2}\ee^{\Phi_1} = \ee^{\hat\Psi},\quad\hat\Psi=X_1+X_2+\dots.
$$
With $\Psi_q=X_1+\dots+ X_q$  it holds $\hat\Psi=\Psi_q+\hat{R}_{q+1}$ with $\hat{R}_{q+1}\in\nR_{q+1}$.
It follows
\begin{align*}
\ee^{\hat\Psi} &=\mathrm{Id}+(\Psi_q+\hat{R}_{q+1})+\tfrac{1}{2}(\Psi_q+\hat{R}_{q+1})^2+\ldots\\
&=\Id+\Psi_q+\tfrac{1}{2}\Psi_q^2+\ldots+R_{q+1}
=\ee^{\Psi_q}+R_{q+1}
\end{align*}
for some  $R_{q+1}\in\nR_{q+1}$.

By induction we obtain $\ee^{\Phi_J}\cdots\ee^{\Phi_1}=\ee^{\Psi_q} + R_{q+1}$ for some
$\Psi_q=X_1^{(q)}+\ldots+X_q^{(q)}$, $X_k^{(q)}\in\gg_k$, and 
$R_{q+1}\in \nR_{q+1}.$  Using Lemma~\ref{lemma:0} we conclude that the $X_k^{(q)}$ do not depend on $q$, i.e.,
$X_1^{(1)}=X_1^{(2)}=\ldots=X_q^{(q)}$ for all $q\geq 1$,
 and the statement of the lemma follows. \qed
\end{proof}

The following theorem states that
if an exponential $\ee^{\Omega}$ of a Lie element is approximated by a product $\ee^{\Phi_J}\cdots\ee^{\Phi_1}$ of exponentials of Lie elements, 
then
the leading error term $\Theta$ is a homogeneous Lie element of some grade $q\geq 1$. 
This situation occurs if $\ee^\Omega$ represents the exact solution operator of (\ref{eq:split_evolution_eq}) or
(\ref{eq:non_auto_evolution_eq}), i.e., if $\Omega=\AA+\BB$ or $\Omega=$ (truncated) Magnus
series (\ref{eq:magnus_series}), and the scheme $\ee^{\Phi_J}\cdots\ee^{\Phi_1}$ represents
respectively a generalized
splitting method or a Magnus-type integrator.

\begin{theorem}\label{thm:I}
Let $\Phi_1,\dots,\Phi_J,\Omega\in\gg$. %=[\mathbb{C}\langle\nA\rangle]$. 
Then for some $q\geq 1$ there
exist $\Theta\in\gg_q$, $\Theta\neq 0$ and $R\in\nR_{q+1}$ such that
\begin{equation}\label{eq:basic_formula}
\ee^{\Phi_J}\cdots\ee^{\Phi_1}=\ee^{\Omega} + \Theta + R.
\end{equation}
\end{theorem}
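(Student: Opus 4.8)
The plan is to build on Lemma~\ref{lemma:I}, which supplies a single sequence $X_k\in\gg_k$, $k=1,2,\dots$, with $\ee^{\Phi_J}\cdots\ee^{\Phi_1}=\ee^{X_1+\dots+X_q}+R_{q+1}$ for every $q\geq1$, and to compare this with the analogous truncation of $\ee^{\Omega}$. Since $\Omega\in\gg$, it has a finite homogeneous decomposition $\Omega=\Omega_1+\dots+\Omega_m$ with $\Omega_k\in\gg_k$; setting $\Omega_k=0$ for $k>m$ and expanding exactly as in the proof of Lemma~\ref{lemma:I}, I obtain $\ee^{\Omega}=\ee^{\Omega_1+\dots+\Omega_q}+R'_{q+1}$ with $R'_{q+1}\in\nR_{q+1}$ for every $q$. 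The natural candidate for the order of the leading error is then the smallest index $q$ at which the two Lie series disagree, i.e.\ the least $q$ with $X_q\neq\Omega_q$. (If no such $q$ exists, the two formal exponents coincide and $\ee^{\Phi_J}\cdots\ee^{\Phi_1}=\ee^{\Omega}$ exactly; this degenerate situation carries no ``leading error term'' and is tacitly excluded, since the statement presupposes a genuine discrepancy.)

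Fixing this $q$, I write $S=X_1+\dots+X_{q-1}=\Omega_1+\dots+\Omega_{q-1}$ for the common lower part, the two sums being equal by minimality of $q$. Working modulo $\nR_{q+1}$, the two sides reduce to $\ee^{S+X_q}$ and $\ee^{S+\Omega_q}$ respectively, so it remains to analyse the grade structure of $\ee^{S+X_q}-\ee^{S+\Omega_q}$ and to collect the various remainders into a single $R\in\nR_{q+1}$.

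The heart of the argument is that, in a fixed grade, the exponential depends linearly on the top grade of its exponent once the lower grades are held fixed. For any exponent $P$, extracting the grade-$j$ component of $\ee^{P}=\sum_{n\geq0}\tfrac{1}{n!}P^{n}$, every contribution with $n\geq2$ is a product of $n$ homogeneous factors of grade $\geq1$ whose grades sum to $j$; when $j\leq q$ each such factor necessarily has grade $\leq q-1$. Applying this with $P=S+X_q$ and with $P=S+\Omega_q$, all $n\geq2$ contributions in any grade $j\leq q$ involve only factors of grade $\leq q-1$, hence only factors drawn from the common part $S$, and therefore coincide for the two exponents. Consequently the grade-$j$ components agree for $j<q$, while in grade $q$ the two can differ only through the linear ($n=1$) term, which contributes $X_q$ on one side and $\Omega_q$ on the other. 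Thus the grade-$q$ component of $\ee^{S+X_q}-\ee^{S+\Omega_q}$ equals exactly $X_q-\Omega_q$, and every remaining component has grade $>q$ and so lies in $\nR_{q+1}$. Setting $\Theta=X_q-\Omega_q$ completes the argument: since $X_q,\Omega_q\in\gg_q$ and $\gg_q$ is a linear space, $\Theta\in\gg_q$ is automatically a homogeneous Lie element, it is nonzero by the choice of $q$, and absorbing $R_{q+1}$, $R'_{q+1}$ and the grade-$>q$ remainder into one $R\in\nR_{q+1}$ yields~(\ref{eq:basic_formula}). The one step I expect to require care is precisely the cancellation of all nonlinear ($n\geq2$) contributions in grade $q$; once that is isolated, the fact that the leading error is a genuine Lie element follows for free, as it is simply the difference of the two Lie elements $X_q$ and $\Omega_q$.
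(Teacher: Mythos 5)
Your proposal is correct, but it reaches the conclusion by a genuinely different route than the paper. The paper applies Lemma~\ref{lemma:I} to the product $\ee^{\Phi_J}\cdots\ee^{\Phi_1}\ee^{-\Omega}$, so that the leading nonvanishing term $X_q$ of the resulting BCH exponent is already the candidate error term; it then expands $\ee^{X_q}-\Id=X_q+\nR_{q+1}$ and transfers the result to the difference $\nS-\ee^{\Omega}$ by right-multiplying with $\ee^{\Omega}$, obtaining $\Theta=X_q$. You instead apply Lemma~\ref{lemma:I} to $\nS$ alone, compare its BCH exponent $\sum_k X_k$ with the homogeneous decomposition of $\Omega$, take $q$ to be the first grade of disagreement, and then prove directly that $\ee^{S+X_q}-\ee^{S+\Omega_q}=(X_q-\Omega_q)+\nR_{q+1}$ via the observation that all $n\geq 2$ terms of the exponential series contributing to grades $\leq q$ are built solely from the common lower part $S$ -- a correct counting argument, since $n\geq 2$ homogeneous factors of grade $\geq 1$ summing to at most $q$ each have grade at most $q-1$. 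What the paper's approach buys is brevity: the multiplication by $\ee^{-\Omega}$ collapses the comparison to a comparison with $\Id$, and no grade-by-grade cancellation analysis is needed. What your approach buys is an explicit identification $\Theta=X_q-\Omega_q$ of the leading error as the first discrepancy between the exact exponent $\Omega$ and the scheme's BCH exponent, which is arguably more informative, and it avoids invoking Lemma~\ref{lemma:I} for a product that artificially includes $\ee^{-\Omega}$. Note also that your tacit exclusion of the degenerate case $\nS=\ee^{\Omega}$ mirrors exactly the paper's own treatment (the paper admits $\Theta=0$, $R=0$ there, in slight tension with the requirement $\Theta\neq 0$ in the statement), so no ground is lost on that point.
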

\begin{proof}
By Lemma~\ref{lemma:I} there exist $X_k\in\gg_k$, $k=1,2,\dots$ such that for all $r\geq 1$
\begin{equation*}
\ee^{\Phi_J}\cdots\ee^{\Phi_1}\ee^{-\Omega} = \ee^{X_1+\ldots+X_r} + R_{r+1}
\end{equation*}
with $R_{r+1}\in\nR_{r+1}$.
The case that $X_k=0$ for all $k$ is only possible if 
$\ee^{\Phi_J}\cdots\ee^{\Phi_1}\ee^{-\Omega}=\Id$ and thus
$\ee^{\Phi_J}\cdots\ee^{\Phi_1}=\ee^{\Omega}$, from which the statement of the theorem
follows for $q\geq 1$ arbitrary and $\Theta=0$, $R=0$.

So let us assume that $X_k\neq 0$ for some $k\geq 1$.
Set $q=\min\{k:\ X_k\neq 0\}$. For $r=q$ it holds
\begin{equation*}
\ee^{\Phi_J}\cdots\ee^{\Phi_1}\ee^{-\Omega} = \ee^{X_q} + R_{q+1}.
\end{equation*}
Consequently,
\begin{align*}
\ee^{\Phi_J}\cdots\ee^{\Phi_1}\ee^{-\Omega}-\Id&= \ee^{X_q} + R_{q+1}- \Id \\
&= \Id+X_q+\tfrac{1}{2}X_q^2+\tfrac{1}{6}X_q^3+\ldots+R_{q+1}-\Id\\
&=X_q+R_{q+1}',
\end{align*}
where $R_{q+1}'=\tfrac{1}{2}X_q^2+\tfrac{1}{6}X_q^3+\ldots+R_{q+1}\in\nR_{q+1}$.
With $Y_k\in\gg_k$ such that $\Omega=Y_1+\ldots+Y_s$ it follows
\begin{align*}
\ee^{\Phi_J}\cdots\ee^{\Phi_1}-\ee^{\Omega}  &= (\ee^{\Phi_J}\cdots\ee^{\Phi_1}\ee^{-\Omega}-\Id)\ee^{\Omega}
=(X_q+R_{q+1}')\ee^{\Omega}\\
&=X_q\ee^{Y_1+\ldots+Y_s}+R_{q+1}'\ee^{\Omega}\\
&=X_q+X_q(Y_1+\ldots+Y_s)+\tfrac{1}{2}X_q(Y_1+\ldots+Y_s)^2+\ldots+R_{q+1}'\ee^{\Omega}\\
&=\Theta+R,
\end{align*}
where $\Theta=X_q\in\gg_q$ and $R=X_q(Y_1+\ldots+Y_s)+\tfrac{1}{2}X_q(Y_1+\ldots+Y_s)^2+\ldots+R_{q+1}'\ee^{\Omega}\in \nR_{q+1}$. \qed
\end{proof}

\subsection{Symmetry}\label{subsct:symmetry}
One-step methods like (generalized) splitting and Magnus-type methods 
(cf.~(\ref{eq:step_split}), (\ref{eq:non_auto_step})) are called
self-adjoint or symmetric if they satisfy
$$\nS(-\tau,t_n+\tau)\nS(\tau,t_n)=\Id.$$
We give a purely formal definition of this property for 
products of exponentials $\ee^{\Phi_J}\cdots\ee^{\Phi_1}$ representing
such methods. To this end,
we define operations $(\cdot)\widehat{\ }$, $(\cdot)\widetilde{\ }$,
$(\cdot)\widetilde{\widehat{\ }}$
 on 
Lie elements $\Phi=\sum_{k=1}^{K}X_k$, $X_k\in\gg_k$ by
\begin{equation}\label{eq:tilde_hat_operations}
\widetilde{\Phi} = \sum_{k=1}^{K}(-1)^{k+1}X_k,\quad
\widehat{\Phi} = -\Phi,\quad
\widehat{\widetilde{\Phi}}= -\widetilde{\Phi},
\end{equation}
and extend these operations to products of exponentials,
\begin{align*}
&\big(\ee^{\Phi_J}\cdots\ee^{\Phi_1}\big)\widehat{\ } =
\ee^{-\Phi_J}\cdots\ee^{-\Phi_1},\quad
\big(\ee^{\Phi_J}\cdots\ee^{\Phi_1}\big)\widetilde{\ } =
\ee^{\widetilde{\Phi}_J}\cdots\ee^{\widetilde{\Phi}_1},\\
&\big(\ee^{\Phi_J}\cdots\ee^{\Phi_1}\big)\widehat{\widetilde{\ }} =
\ee^{-\widetilde{\Phi}_J}\cdots\ee^{-\widetilde{\Phi}_1}.
\end{align*}
The adjoint of a product of exponentials $\ee^{\Phi_J}\cdots\ee^{\Phi_1}$ is defined by
\begin{equation*}
\big(\ee^{\Phi_J}\cdots\ee^{\Phi_1}\big)^* = \ee^{\widetilde{\Phi}_1}\cdots\ee^{\widetilde{\Phi}_J}
\end{equation*}
(note the order of the exponentials),
and we say that %a product of exponentials 
$\ee^{\Phi_J}\cdots\ee^{\Phi_1}$ is self-adjoint or symmetric if
\begin{equation*}%\label{eq:selfadjoint}
\big(\ee^{\Phi_J}\cdots\ee^{\Phi_1}\big)^* = \ee^{\Phi_J}\cdots\ee^{\Phi_1}.
\end{equation*}
A single exponential $\ee^\Phi$ is self-adjoint, $(\ee^\Phi)^*=\ee^\Phi$,
if and only if $\widetilde{\Phi}=\Phi$,
if and only if $\Phi$ is a sum of homogeneous Lie elements of odd grade,
\begin{equation*}
\Phi=X_1+X_3+\dots+X_K,\quad X_k\in\gg_k,\ K \mbox{ odd}.
\end{equation*}
It follows that $\ee^\Omega$ representing
the exact solution operator for (\ref{eq:split_evolution_eq}) or (\ref{eq:non_auto_evolution_eq})
is self-adjoint, where $\Omega=\AA+\BB$ or $\Omega=$ (truncated) Magnus series (\ref{eq:magnus_series}).

The following theorem improves Theorem~\ref{thm:I} for the
self-adjoint case.
\begin{theorem}\label{thm:I_selfadjoint}
Let $\Phi_1,\dots,\Phi_J,\Omega\in\gg$ %=[\mathbb{C}\langle\nA\rangle]$
with $\nS=\ee^{\Phi_J}\cdots\ee^{\Phi_1}$ and $\ee^\Omega$
self-adjoint. 
Then for some {\em odd} $q\geq 1$ there
exist $\Theta\in\gg_q$, $\Theta\neq 0$ and $R\in\nR_{q+1}$ such that
\begin{equation*}%\label{eq:basic_formula_sa}
\ee^{\Phi_J}\cdots\ee^{\Phi_1}=\ee^{\Omega} + \Theta + R.
\end{equation*}
\end{theorem}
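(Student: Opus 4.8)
The plan is to leverage the single extra hypothesis—self-adjointness of both $\nS=\ee^{\Phi_J}\cdots\ee^{\Phi_1}$ and $\ee^\Omega$—to pin down the parity of the grade $q$ of the leading error term $\Theta$ already produced by Theorem~\ref{thm:I}. Indeed, Theorem~\ref{thm:I} hands us, for free, $\Theta\in\gg_q$, $\Theta\neq0$, and $R\in\nR_{q+1}$ with $\nS=\ee^\Omega+\Theta+R$; the only thing left to prove is that $q$ is odd. The difficulty is that the symmetry operation $(\cdot)^*$ is defined only for products of exponentials, whereas I need to apply symmetry to the mixed expression $\ee^\Omega+\Theta+R$. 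My first step is therefore to extend the relevant operation to all of $\mathbb{C}\langle\langle\nA\rangle\rangle$.

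To this end I would introduce the grade-parity map $\sigma$, the algebra automorphism of $\mathbb{C}\langle\langle\nA\rangle\rangle$ acting on each homogeneous component by $\sigma|_{V_k}=(-1)^k\,\Id$; it is a continuous automorphism because $\grade$ is additive under concatenation, it satisfies $\sigma(\ee^Y)=\ee^{\sigma(Y)}$ for every exponent $Y$ without constant term, and it commutes with inversion and preserves each ideal $\nR_{q+1}$. The point is that $\sigma$ encodes the formal substitution $\tau\to-\tau$, and the symmetry operations of Subsection~\ref{subsct:symmetry} are recovered from it: for $\Phi\in\gg$ one has $\widetilde\Phi=-\sigma(\Phi)$, hence $\ee^{\widetilde\Phi}=\sigma\big((\ee^{\Phi})^{-1}\big)$, and consequently $\nS^*=\sigma(\nS^{-1})$. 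Self-adjointness $\nS^*=\nS$ then becomes, after applying $\sigma$ and using $\sigma^2=\Id$, the clean relation $\nS\cdot\sigma(\nS)=\Id$. Likewise, self-adjointness of $\ee^\Omega$ reads $\sigma(\Omega)=-\Omega$, so that $\sigma(\ee^\Omega)=\ee^{-\Omega}$.

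With these identities in hand the computation is short. Applying $\sigma$ to the conclusion of Theorem~\ref{thm:I} gives $\sigma(\nS)=\ee^{-\Omega}+(-1)^q\Theta+\sigma(R)$ with $\sigma(R)\in\nR_{q+1}$, since $\Theta\in\gg_q\subset V_q$. I would then form the product $\nS\cdot\sigma(\nS)$ and track only terms of grade at most $q$: using $\ee^\Omega\ee^{-\Omega}=\Id$, the fact that $\ee^{\pm\Omega}$ has constant term $\Id$ so that $\ee^\Omega\Theta=\Theta+\nR_{q+1}$ and $\Theta\ee^{-\Omega}=\Theta+\nR_{q+1}$, and that all remaining cross terms (e.g.\ $\Theta\cdot(-1)^q\Theta\in\nR_{2q}$ and everything involving $R$ or $\sigma(R)$) lie in $\nR_{q+1}$ because $2q\geq q+1$, I obtain
\[
\nS\cdot\sigma(\nS)=\Id+\big(1+(-1)^q\big)\Theta+R',\qquad R'\in\nR_{q+1}.
\]
Comparing this with the self-adjointness relation $\nS\cdot\sigma(\nS)=\Id$ and matching the homogeneous component of grade $q$ (which none of the $\nR_{q+1}$ terms can touch) yields $\big(1+(-1)^q\big)\Theta=0$. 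Since $\Theta\neq0$, this forces $(-1)^q=-1$, i.e.\ $q$ is odd, which is exactly the assertion.

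The only genuinely delicate point is the first one: correctly extending the order-reversing, sign-flipping symmetry to the full power-series algebra so that it can act on $\ee^\Omega+\Theta+R$, and checking that the resulting map $\sigma$ is an algebra automorphism compatible with exponentials and with inversion. Once $\nS\cdot\sigma(\nS)=\Id$ is established, the remainder is routine grade bookkeeping of the kind already used in the proofs of Lemma~\ref{lemma:0} and Theorem~\ref{thm:I}, and the parity conclusion drops out of a single coefficient comparison.
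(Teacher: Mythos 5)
Your proof is correct and is essentially the paper's own argument: your grade-parity automorphism $\sigma$ (acting by $(-1)^k$ on $V_k$) is exactly the paper's substitution of generators $\AA_k\mapsto\widehat{\widetilde{\AA}}_k$, your identity $\nS\cdot\sigma(\nS)=\Id$ is the paper's $\widehat{\widetilde{\!\nS\ }}\!\!\!\nS=\Id$, and matching the grade-$q$ component to get $\big(1+(-1)^q\big)\Theta=0$ is the same as the paper's conclusion $\Theta=\widetilde{\Theta}$. The only difference is presentational: you formalize the substitution as an automorphism of $\mathbb{C}\langle\langle\nA\rangle\rangle$ with explicit compatibility properties, which the paper leaves implicit.
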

\begin{proof}
By Theorem~\ref{thm:I},    
$\nS=\ee^{\Omega}+\Theta+R$
for some $\Theta\in\gg_q$, 
$\Theta\neq 0$, $R\in\nR_{q+1}$.
We prove that $q$ has to be odd under the given assumptions.
On one hand,
\begin{equation*}
\widehat{\widetilde{\!\nS\ }}\!\!\!\nS
=\widehat{\widetilde{\!\nS\ }}\!\!\!\nS^*
%=\big(\ee^{\widetilde{\Phi}_J}\cdots\ee^{\widetilde{\Phi}_1}\big)\widehat{\ }\,\ee^{\Phi_J}\cdots\ee^{\Phi_1}
=\ee^{-\widetilde{\Phi}_J}\cdots\ee^{-\widetilde{\Phi}_1}\ee^{\widetilde{\Phi}_1}\cdots\ee^{\widetilde{\Phi}_J}=\Id,
%=(\ee^{\Phi_1}\cdots\ee^{\Phi_J})\widehat{\ }\,\ee^{\Phi_J}\cdots\ee^{\Phi_1}\\
%&=\ee^{-\Phi_1}\cdots\ee^{-\Phi_J}\ee^{\Phi_J}\cdots\ee^{\Phi_1}
%=\Id.
\end{equation*}
On the other hand,
substituting the generators 
$\AA\mapsto\widehat{\widetilde{\AA}}$,
$\BB\mapsto\widehat{\widetilde{\BB}}$, or 
$\AA_k\mapsto\widehat{\widetilde{\AA}}_k, k=1,2,\dots$ in
$\nS=\ee^{\Phi_J}\cdots\ee^{\Phi_1}=\ee^{\Omega}+\Theta+R$
and using $\widetilde{\Omega}=\Omega$
we obtain
$\widehat{\widetilde{\!\nS\ }}\!\!
= \ee^{-\Omega}-\widetilde{\Theta}
-\widetilde{R}$, $\widetilde{R}\in\nR_{q+1}$, and thus
\begin{align*}
\widehat{\widetilde{\!\nS\ }}\!\!\!\nS
&=(\ee^{-\Omega}-\widetilde{\Theta}
-\widetilde{R})(\ee^{\Omega}+\Theta+R)\quad&& \\
&=\Id+\ee^{-\Omega}\Theta-\ee^{\Omega}\widetilde{\Theta} + R' && (\mbox{for some } R'\in \nR_{q+1})\\
&=\Id+(\Id-\Omega+\dots)\Theta-
(\Id+\Omega+\dots)\widetilde{\Theta}+R'\\
&=\Id+\Theta-\widetilde{\Theta}+R''
 && (\mbox{for some } R''\in \nR_{q+1}).
\end{align*}
It follows $\Theta=\widetilde{\Theta}$ for $\Theta\in\gg_k$ 
and thus $q$ odd by (\ref{eq:tilde_hat_operations}). \qed
\end{proof}

\subsection{Lyndon words, Lyndon basis, order conditions, local error measure}

Our next goal is to  determine explicitly the leading error term $\Theta\in\gg_q$ in (\ref{eq:basic_formula}) whose existence is guaranteed by Theorem~\ref{thm:I}.
Thus, given a basis $\nB_q$ of $\gg_q$ we want to calculate the coefficients
$c_b=\coeff_{\nB_q}(b,\Theta)$ in the representation
\begin{equation}\label{eq:lin_comb_Theta}
\Theta =\sum_{b\in\nB_q}c_b b.
\end{equation}
A convenient choice for the basis $\nB_q=\nB_q^{\nA}$ of $\gg_q=\gg_q^\nA$
is the Lyndon basis whose elements
correspond uniquely to Lyndon words over
the alphabets $\nA=\{\AA,\BB\}$ or $\nA=\{\AA_1,\dots,\AA_K\}$.
Considering the lexicographical order `$<$' on words (with $\AA<\BB$ and $\AA_1<\ldots<\AA_K$),
a word $w=w_1\cdots w_{n}\in\nA^*$ is a Lyndon word if it is strictly less than any of its proper right
factors $w_i\cdots w_{n}$, $i=2,\dots,n$.
For a standard reference for Lyndon words, see \cite{lothaire_1997}.
%; an efficient algorithm for listing Lyndon words in lexicographical order is provided %by Duval \cite{DUVAL1988255}. 
An algorithm for the efficient computation of all  
Lyndon words of given {\em length} in
lexicographical order is given in
\cite[Algorithm~2.1]{CATTELL2000267}. 
Since $\grade(w)=\mathrm{length}(w)$ for words $w$ over the
alphabet $\nA=\{\AA,\BB\}$, this algorithm computes 
$$\nW_q^{\nA}=\{
\text{Lyndon words of grade } q\text { over the alphabet } \nA\}$$
directly for $\nA=\{\AA,\BB\}$.
For $\nW_q^{\{\AA_1,\dots,\AA_q\}}$ %for $\nA=\{\AA_1,\dots,\AA_q\}$ 
we can then use
\begin{equation*}
\AA\underbrace{\BB\cdots\BB}_{d_1-1}
\AA\underbrace{\BB\cdots\BB}_{d_2-1}\cdots
\AA\underbrace{\BB\cdots\BB}_{d_\ell-1}
\mapsto \AA_{d_1}\AA_{d_2}\cdots\AA_{d_\ell},
\end{equation*}
which defines a bijective and lexicographic order preserving 
mapping $\nW_q^{\{\AA,\BB\}}\to\nW_q^{\{\AA_1,\dots,\AA_q\}}$.

\begin{table}[t!]
\begin{center}
\begin{tabular}{|c|l|l|}
\hline
$q$ & Lyndon words & Lyndon basis %& rightnormed basis 
\\
\hline
\hline
1   & $\AA$, $\BB$ 
    & $\AA$, $\BB$ 
%    & $\AA$, $\BB$  
\\
\hline 
2	& $\AA\BB$ 
    & $[\AA,\BB]$  
%    & $[\BB,\AA]$  
\\    
\hline
3   & $\AA\AA\BB$, $\AA\BB\BB$ 
	& $[\AA,[\AA,\BB]]$,\ $[[\AA,\BB],\BB]$ 
%	&  $[\AA,[\BB,\AA]]$,  $[\BB,[\BB,\AA]]$ 
\\
\hline
4   &  $\AA\AA\AA\BB$,  $\AA\AA\BB\BB$, $\AA\BB\BB\BB$
    &  $[\AA,[\AA,[\AA,\BB]]]$,\ $[\AA,[[\AA,\BB],\BB]]$,\ $[[[\AA,\BB],\BB],\BB]$ 
%    & $[\AA,[\AA,[\BB,\AA]]]$, $[\BB,[\AA,[\BB,\AA]]]$, $[\BB,[\BB,[\BB,\AA]]]$
\\    
\hline
5   &$\AA\AA\AA\AA\BB$, $\AA\AA\AA\BB\BB$, $\AA\AA\BB\AA\BB$,
	&$[\AA,[\AA,[\AA,[\AA,\BB]]]]$,\ $[\AA,[\AA,[[\AA,\BB],\BB]]]$,\ $[[\AA,[\AA,\BB]],[\AA,\BB]]$,
%    & $[\AA,[\AA,[\AA,[\BB,\AA]]]]$, $[\BB,[\AA,[\AA,[\BB,\AA]]]]$,$[\AA,[\BB,[\AA,[\BB,\AA]]]]$, 
    \\ 
    &$\AA\AA\BB\BB\BB$, $\AA\BB\AA\BB\BB$, $\AA\BB\BB\BB\BB$
    &$[\AA,[[[\AA,\BB],\BB],\BB]]$,\ $[[\AA,\BB],[[\AA,\BB],\BB]]$,\ $[[[[\AA,\BB],\BB],\BB],\BB]$
%    &$[\BB,[\BB,[\AA,[\BB,\AA]]]]$,$[\AA,[\BB,[\BB,[\BB,\AA]]]]$, $[\BB,[\BB,[\BB,[\BB,\AA]]]]$
    \\ 
\hline
\end{tabular}
\caption{Lyndon words $\nW_q^\nA$ of grade $q$ and Lyndon basis $\nB_q^\nA$ of $\gg_q$ for generators $\nA=\{\AA,\BB\}$ with grading (\ref{eq:grade_gen_split}).
\label{tab:lyndon_a_b}
}
\end{center}
\end{table}

\begin{table}[t!]
\begin{center}
\begin{tabular}{|c|l|l|}
\hline
$q$ &Lyndon words & Lyndon basis %& rightnormed basis\!\!\! 
\\
\hline
\hline
1 & $\AA_1$ & $\AA_1$ %& $\AA_1$ 
\\
\hline
2 &  $\AA_2$ & $\AA_2$ %& $\AA_2$ 
\\
\hline
3 & $\AA_1\AA_2$, $\AA_3$ 
  & $[\AA_1,\AA_2]$, $\AA_3$ 
%  & $[\AA_2,\AA_1]$, $\AA_3$ 
\\
\hline
4 & $\AA_1\AA_1\AA_2$, $\AA_1\AA_3$, $\AA_4$
  & $[\AA_1,[\AA_1,\AA_2]]$, \ $[\AA_1,\AA_3]$, \ $\AA_4$
%  &$[\AA_1,[\AA_2,\AA_1]]$, $[\AA_3,\AA_1]$, $\AA_4$ 
\\
\hline
5 & $\AA_1\AA_1\AA_1\AA_2$, $\AA_1\AA_1\AA_3$, $\AA_1\AA_2\AA_2$,
  & $[\AA_1,[\AA_1,[\AA_1,\AA_2]]]$, \ $[\AA_1,[\AA_1,\AA_3]]$, \ $[[\AA_1,\AA_2],\AA_2]$,
%  & $[\AA_1,[\AA_1,[\AA_2,\AA_1]]]$, $[\AA_1,[\AA_3,\AA_1]]$, $[\AA_2,[\AA_2,\AA_1]]$,  
 \\
 & $\AA_1\AA_4$, $\AA_2\AA_3$, $\AA_5$
 & $[\AA_1,\AA_4]$, \ $[\AA_2,\AA_3]$, \ $\AA_5$
%  $[\AA_4,\AA_1]$,  &$[\AA_3,\AA_2]$, $\AA_5$
  \\          
\hline
\end{tabular}
\caption{Lyndon words $\nW_q^\nA$ of grade $q$ and Lyndon basis $\nB_q^\nA$ of $\gg_q$ for generators $\nA=\{\AA_1,\dots,\AA_q\}$ with grading (\ref{eq:grade_gen_magnus_type}).
\label{tab:lyndon_ak}
}
\end{center}
\end{table}

Let $w$ be a Lyndon word of length at least 2, and let $u,v$ be words such that $w=uv$ and $v$ 
is the longest  Lyndon word appearing as a proper right factor of $w$. Then $u$ is also a
Lyndon word, and $w=uv$ is called the right standard factorization of $w$.
We define the standard bracketing of Lyndon words recursively by
\begin{align*}
   &\beta(g)=g, &&\mbox{if}\ g\in\nA, \\
   &\beta(w)=[\beta(u),\beta(v)], &&\mbox{if %$\mathrm{length}(w)\geq 2$ and 
   $w=uv$ is the right standard factorization of $w$}.
\end{align*}
Then 
the Lyndon basis of $\gg_q=\gg_q^{\nA}$ is given by
$$
\nB_q^{\nA}=\{\beta(w): w\in\nW_q^{\nA}\}.
$$
For an efficient algorithm for the computation of $\beta(w)$ for all
Lyndon words $w$ of given {\em length} 
%in lexicographical order 
see \cite{SAWADA200321}. This algorithm computes $\nB_q^{\nA}$ directly for 
$\nA=\{\AA,\BB\}$, and then $\nB_q^{\{\AA_1,\dots,\AA_q\}}$ can
be obtained by performing all possible
substitutions
\begin{equation*}
   [[[\ldots[\AA,\underbrace{\BB],\ldots],\BB],\BB}_{d-1}] \mapsto \AA_d
\end{equation*}
of sub-commutators in the elements of $\nB_q^{\{\AA,\BB\}}$.

Consider 
the matrix
$$T_q=T_{\nB_{q}\to\nW_{q}} = \big(\coeff(v,\beta(w))\big)_{v\in\nW_q,w\in\nW_q},$$
where it is assumed that the Lyndon words $v,w$ are traversed in lexicographical order.
For example, for $\nA=\{\AA,\BB\}$ (cf. Table~\ref{tab:lyndon_a_b}),\footnote{A
detailed calculation leading to $T_5$ for $\nA=\{\AA,\BB\}$ can be found in 
\cite[Section~2.1]{part1}. Generally, the entries of these matrices can be calculated by applying the algorithm (\ref{ex:first_component})--(\ref{eq:expY}) from 
Section~\ref{subsct:homos} below.}
\begin{equation}\label{eq:T_splitting}
T_1=I_2,\ \ T_2=I_1,\ \ T_3=I_2,\ \ T_4=I_3,\ \
T_5=\left(
\begin{array}{rrrrrr}
1 \\
& 1 \\
& -2 & 1 \\
& & & 1 \\
& & & -3 & 1\\
& & & & & 1
\end{array}
\right),
\end{equation}
and for $\nA=\{\AA_1,\dots,\AA_q\}$ (cf. Table~\ref{tab:lyndon_ak}),
\begin{equation*}
T_1=I_1,\ \ T_2=I_1, \ \ T_3=I_2, \ \ T_4=I_3,\ \ T_5=I_6,
\end{equation*}
where $I_k$ denotes the identity matrix in $\mathbb{C}^{k\times k}$.
The situation shown in these examples occurs also in the general case,
the matrices $T_q$ are always lower triangular  with unit diagonal,
see \cite[Section~2.1]{part1}.
In particular, $T_q$ is invertible, the inverse $T_q^{-1}$ has integer entries (this follows from $\det T_q=1$), and the $c_b$ in (\ref{eq:lin_comb_Theta})
can be calculated via
\begin{equation}
\begin{split}
&\mathbf{c}_{\nB_q}=T_q^{-1}\cdot \mathbf{c}_{\nW_q},\\
&\mathbf{c}_{\nB_q} = (c_b) =\big(\coeff_{\nB_q}(b,\Theta)\big)_{b\in \nB_q},\quad
\mathbf{c}_{\nW_q} = (c_w) =\big(\coeff(w,\Theta)\big)_{w\in \nW_q}.
\end{split}
\label{eq:coeff_vectors}
\end{equation}
We have thus reduced the computation of coefficients $\coeff_{\nB_q}(b,\Theta)$ of commutators $b\in\mathcal{B}_q$ to the computation of coefficients
\begin{equation}\label{eq:coeff_w_theta}
\coeff(w,\Theta) = \coeff(w,\ee^{\Phi_J}\cdots\ee^{\Phi_1}) -\coeff(w,\ee^{\Omega})
\end{equation}
of words $w\in\mathcal{W}_q$.
%\subsection{Order conditions}

The following theorem introduces order conditions, which, if satisfied,
determine the value of $q$ in Theorem~\ref{thm:I}, and thus the
order of the scheme $\ee^{\Phi_J}\cdots\ee^{\Phi_1}$ as an approximation of 
$\ee^{\Omega}$.
\begin{theorem}[Order conditions]\label{thm:order_conditions}Let $\Phi_1,\dots,\Phi_J,\Omega\in\gg$ and
$p\geq 1$.
If the order conditions
\begin{equation}\label{eq:order_conditions}
\coeff(w,\ee^{\Phi_J}\cdots\ee^{\Phi_1}-\ee^{\Omega})=0,\quad 
%\mbox{for all Lyndon words }
w\in\bigcup_{q=1}^p\nW_q
%\mbox{ of grade } q\leq p
\end{equation}
are satisfied
for all Lyndon words of grade $q\leq p$,
then the scheme $\ee^{\Phi_J}\cdots\ee^{\Phi_1}$ has order $p$ as an approximation of $\ee^{\Omega}$, 
\begin{equation}\label{eq:order_stmnt}
\ee^{\Phi_J}\cdots\ee^{\Phi_1}-\ee^\Omega\in\nR_{p+1},
\end{equation}
or more precisely, applying Theorem~\ref{thm:I},
\begin{equation} \label{eq:leading_error_term}
\ee^{\Phi_J}\cdots\ee^{\Phi_1}=\ee^\Omega+\Theta+R,\quad \Theta\in\gg_{p+1},\ R\in\nR_{p+2}.
\end{equation}
For {\em self-adjoint} schemes $\ee^{\Phi_J}\cdots\ee^{\Phi_1}$ we may assume that $p$ is even, and the statement of the theorem holds already if the order conditions (\ref{eq:order_conditions}) are satisfied only for all Lyndon words of {\em odd} grade $q\leq p$.\footnote{Here we have to assume that $\ee^{\Omega}$ representing the exact solution operator is itself self-adjoint. For $\Omega=\AA+\BB$ or $\Omega=$ (truncated) Magnus series 
(\ref{eq:magnus_series}) this is of course the case.}
\end{theorem}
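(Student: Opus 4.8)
The plan is to show that the order conditions force the leading error term $\Theta$ guaranteed by Theorem~\ref{thm:I} to have grade at least $p+1$, and then to combine this with the self-adjointness argument of Theorem~\ref{thm:I_selfadjoint} for the symmetric case. First I would invoke Theorem~\ref{thm:I} to write $\ee^{\Phi_J}\cdots\ee^{\Phi_1}=\ee^\Omega+\Theta+R$ with $\Theta\in\gg_q$, $\Theta\neq 0$, and $R\in\nR_{q+1}$ for some grade $q\geq 1$, unless the scheme equals $\ee^\Omega$ exactly (in which case the conclusion is immediate with arbitrarily high order). The strategy is to prove that the hypotheses (\ref{eq:order_conditions}) rule out every value $q\leq p$, so that necessarily $q\geq p+1$.

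The heart of the argument is the translation between the order conditions, phrased as vanishing coefficients of \emph{words}, and the vanishing of the homogeneous Lie element $\Theta$. Suppose for contradiction that $q\leq p$. Then $\Theta\in\gg_q$ is the grade-$q$ component of $\ee^{\Phi_J}\cdots\ee^{\Phi_1}-\ee^\Omega$, since $R$ contributes only to grades $\geq q+1$. Expressing $\Theta$ in the Lyndon basis as in (\ref{eq:lin_comb_Theta}), the coefficient vector $\mathbf{c}_{\nB_q}$ is obtained from the word-coefficient vector $\mathbf{c}_{\nW_q}=(\coeff(w,\Theta))_{w\in\nW_q}$ via the invertible transformation $\mathbf{c}_{\nB_q}=T_q^{-1}\mathbf{c}_{\nW_q}$ of (\ref{eq:coeff_vectors}). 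Now for each Lyndon word $w\in\nW_q$ we have, using (\ref{eq:coeff_w_theta}), that $\coeff(w,\Theta)=\coeff(w,\ee^{\Phi_J}\cdots\ee^{\Phi_1}-\ee^\Omega)$, which is exactly the quantity set to zero by the order condition for $w$. Since $q\leq p$, the order conditions (\ref{eq:order_conditions}) apply to all of $\nW_q$, so $\mathbf{c}_{\nW_q}=0$, hence $\mathbf{c}_{\nB_q}=T_q^{-1}\cdot 0=0$, and therefore $\Theta=0$, contradicting $\Theta\neq 0$. Thus $q\geq p+1$, which gives (\ref{eq:order_stmnt}) because $\Theta+R\in\nR_{p+1}$, and relabeling yields the sharper statement (\ref{eq:leading_error_term}) with $\Theta\in\gg_{p+1}$, $R\in\nR_{p+2}$.

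For the self-adjoint case I would argue similarly but feed in Theorem~\ref{thm:I_selfadjoint}, which already guarantees that the grade $q$ of the leading error term is \emph{odd}. Here it suffices to assume $p$ even and to impose the order conditions only for Lyndon words of odd grade $q\leq p$: the same contradiction argument rules out every odd $q\leq p$ (the even grades are automatically excluded by Theorem~\ref{thm:I_selfadjoint}), so the smallest possible error grade is the next odd number $\geq p+1$, which is $p+1$ itself since $p$ is even. One subtlety worth flagging is that the identity $\coeff(w,\Theta)=\coeff(w,\ee^{\Phi_J}\cdots\ee^{\Phi_1}-\ee^\Omega)$ is valid for $w\in\nW_q$ precisely because $w$ has grade exactly $q$: the remainder $R\in\nR_{q+1}$ contributes no words of grade $q$, so its word-coefficients vanish on $\nW_q$.

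The step I expect to be the main obstacle is the clean bookkeeping that lets us read off $\coeff(w,\Theta)$ directly from the scheme without reference to $R$; this rests entirely on the grading decomposition $\mathbb{C}\langle\langle\nA\rangle\rangle=\bigoplus_{k=0}^q V_k\oplus\nR_{q+1}$ and on the fact that $\Theta$ lives purely in the single graded piece $V_q\cap\gg_q$, so that the word-coefficients of the full difference at grade $q$ coincide with those of $\Theta$. Once this is in place, the invertibility of $T_q$ does the rest, and the argument closes without further calculation.
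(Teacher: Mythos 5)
Your proof is correct and follows essentially the same route as the paper's: both invoke Theorem~\ref{thm:I}, translate the vanishing of Lyndon-word coefficients $\mathbf{c}_{\nW_q}=0$ into $\Theta=0$ via the invertible transformation $\mathbf{c}_{\nB_q}=T_q^{-1}\mathbf{c}_{\nW_q}$ of (\ref{eq:coeff_vectors}), and appeal to Theorem~\ref{thm:I_selfadjoint} to skip the even grades in the self-adjoint case. The only difference is presentational: the paper rules out $q=1,2,\dots,p$ by an iterated bootstrap, while you dispose of all $q\leq p$ in a single contradiction step, and you make explicit the (correct) bookkeeping point that $R\in\nR_{q+1}$ contributes nothing to grade-$q$ word coefficients, which the paper uses implicitly through (\ref{eq:coeff_w_theta}).
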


\begin{proof}
We apply a bootstrap argument:
Assume the statement of Theorem~\ref{thm:I} holds for $q=1$,
$$\ee^{\Phi_J}\cdots\ee^{\Phi_1}=\ee^\Omega+\Theta_1+R_2,\quad 0\neq\Theta_1\in\gg_1,\ R_2\in\nR_2.$$
Then the order conditions (\ref{eq:order_conditions}) related with $w\in\nW_1$ imply 
$\mathbf{c}_{\nW_1}=0$, 
thus $\mathbf{c}_{\nB_1}=T_1^{-1}\mathbf{c}_{\nW_1}=0$ 
(see (\ref{eq:coeff_vectors}))
 and hence
$\Theta_1=0$, contradicting the requirement $\Theta_1\neq 0$. 

It follows $q\geq 2$, so assume the statement of Theorem~\ref{thm:I} holds for $q=2$,
$$\ee^{\Phi_J}\cdots\ee^{\Phi_1}=\ee^\Omega+\Theta_2+R_3,\quad 0\neq\Theta_2\in\gg_2,\ R_3\in\nR_3.$$
Just as before,
the order conditions (\ref{eq:order_conditions}) related with $w\in\nW_2$ 
(or, in the self-adjoint case, simply the fact that the leading error term has odd order, see Theorem~\ref{thm:I_selfadjoint})
imply $\Theta_2=0$ if $p\geq 2$, again a contradiction. 

This reasoning can be iterated until $q=p$ and (\ref{eq:order_stmnt}) follows.
\qed
\end{proof}

A possible measure for the accuracy of a scheme of order $p$ as an approximation of the exact
solution operator $\ee^\Omega$
is the local error measure
\begin{equation}\label{eq:lem}
\mathrm{LEM}=\Vert\mathbf{c}_{\nW_{p+1}}\Vert = \sqrt{\sum_{w\in\nW_{p+1}}\!\!|c_w|^2},
\end{equation}
which is
built up from the coefficients $c_w=\coeff(w,\Theta)$ of all Lyndon words $w$ of grade $p+1$ in the leading
error term $\Theta$ from (\ref{eq:leading_error_term}), cf.\ \cite[Section~4]{part1}.

\subsection{Algorithm for computing coefficients of words}
\label{subsct:homos}
We will now derive an effective algorithm for the computation of
$ \coeff(w,\ee^{\Phi_J}\cdots\ee^{\Phi_1})$ in (\ref{eq:coeff_w_theta}), which
is based on a suitably constructed family of homomorphism $\{\varphi_w:w\in\nA^*\}$.
For each word $w=w_1\cdots w_{\ell(w)}\in\nA^*$ of length $\ell(w)\geq 1$ over the alphabet $\nA=\{\AA,\BB\}$ or $\nA=\{\AA_1,\dots,\AA_K\}$
we define the map 
\begin{equation}\label{eq:phi_w}
\varphi_w: \mathbb{C}\langle\langle{\nA}\rangle\rangle\to\mathbb{C}^{(\ell(w)+1)\times(\ell(w)+1)}\ \mbox{ by }\
\varphi_w(X)_{i,j} = 
%\varphi_{i,j}^{(w)}), \
%\varphi_{i,j}^{(w)}=
\left\{\begin{array}{ll}
\coeff(w_{i:j-1},X),&\mbox{if $i<j$,}\\
\coeff(\Id,X),&\mbox{if $i=j$,}\\
0,&\mbox{if $i>j$.}
\end{array}\right.
\end{equation}
Here $w_{i:j-1}=w_iw_{i+1}\cdots w_{j-1}$ denotes the subword of $w$ of length $j-i$, starting at position $i$ and ending at position $j-1$.
\begin{theorem}\label{thm:III} The map $\varphi_w$ 
defined by (\ref{eq:phi_w})
is an algebra homomorphism $$\mathbb{C}\langle\langle{\nA}\rangle\rangle\to\mathbb{C}^{(\ell(w)+1)\times(\ell(w)+1)},$$ i.e.,
\begin{enumerate}[(i)]
\item $\varphi_w$ is linear,
\begin{equation*}
\varphi_w(\alpha X+\beta Y)=\alpha\varphi_w(X)+\beta\varphi_w(Y),\quad
X,Y\in \mathbb{C}\langle\langle{\nA}\rangle\rangle,\ \alpha,\beta\in\mathbb{C};
\end{equation*}
\item $\varphi_w$ preserves the multiplicative structure,
\begin{equation*}
\varphi_w(X\cdot Y)=\varphi_w(X)\cdot\varphi_w(Y),\quad X,Y\in\mathbb{C}\langle\langle{\nA}\rangle\rangle.
\end{equation*}
\end{enumerate}
Furthermore, 
if $X\in\gg=[\mathbb{C}\langle\nA\rangle]$ (or more generally, 
if $X\in\mathbb{C}\langle\langle{\nA}\rangle\rangle$, % with 
 $\coeff(\Id, X)=0$), then  it holds
\begin{equation}\label{eq:exp_phi}
\varphi_w(\exp X) = \exp\varphi_w(X),
\end{equation}
where the exponential of the strictly upper triangular and thus nilpotent matrix $\varphi_w(X)$ is exactly computable in
a finite number of steps.
\end{theorem}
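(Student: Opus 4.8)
The plan is to establish the three assertions in order, with part (ii) carrying essentially all the content and the remaining claims following formally from it. Part (i) is immediate: for each fixed index pair the entry of $\varphi_w$ is either $\coeff(w_{i:j-1},\cdot)$ or $\coeff(\Id,\cdot)$ or the constant $0$, and extracting a single coefficient from a formal power series is by definition a linear functional, so linearity of the matrix-valued map $\varphi_w$ holds entrywise.

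For part (ii) I would start from the defining identity for coefficients in a concatenation product in $\mathbb{C}\langle\langle\nA\rangle\rangle$: since multiplication is concatenation of words,
$$\coeff(z,XY)=\sum_{z=uv}\coeff(u,X)\,\coeff(v,Y),$$
the sum ranging over all factorizations of $z$ into a (possibly empty) prefix $u$ and suffix $v$; this is a finite sum, so $XY$ and all its coefficients are well defined. Applying it with $z=w_{i:j-1}$ (for $i<j$), whose factorizations are exactly $w_{i:j-1}=w_{i:k-1}\,w_{k:j-1}$ for $i\le k\le j$, gives
$$\coeff(w_{i:j-1},XY)=\sum_{k=i}^{j}\coeff(w_{i:k-1},X)\,\coeff(w_{k:j-1},Y).$$
On the matrix side, because $\varphi_w(X)_{i,k}=0$ for $k<i$ and $\varphi_w(Y)_{k,j}=0$ for $k>j$, the product $\bigl(\varphi_w(X)\varphi_w(Y)\bigr)_{i,j}$ collapses to the same range $i\le k\le j$. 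The crucial point, and the place I expect to argue most carefully, is that the two boundary indices $k=i$ and $k=j$ correspond to an empty prefix $w_{i:i-1}=\Id$ and an empty suffix $w_{j:j-1}=\Id$, and these are supplied precisely by the diagonal convention $\varphi_w(X)_{i,i}=\coeff(\Id,X)$ built into the definition (\ref{eq:phi_w}). With that identification every summand matches, so $\varphi_w(XY)_{i,j}=\bigl(\varphi_w(X)\varphi_w(Y)\bigr)_{i,j}$; the diagonal case $i=j$ (only the factorization $\Id=\Id\cdot\Id$) and the case $i>j$ (both sides zero) are then checked directly.

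Finally, for the exponential identity I would first read off $\varphi_w(\Id)=I$ from (\ref{eq:phi_w}), so that parts (i) and (ii) yield $\varphi_w(X^m)=\varphi_w(X)^m$ for every $m\ge 0$. When $\coeff(\Id,X)=0$ the matrix $\varphi_w(X)$ is strictly upper triangular, hence nilpotent with $\varphi_w(X)^m=0$ for $m\ge\ell(w)+1$, so $\exp\varphi_w(X)$ is the finite sum $\sum_{m=0}^{\ell(w)}\tfrac{1}{m!}\varphi_w(X)^m$. On the other side $X$ has no constant term, so $X^m$ involves only words of length at least $m$ and $\coeff(w_{i:j-1},X^m)=0$ once $m>j-i$; thus each entry $\varphi_w(\ee^X)_{i,j}=\sum_{m\ge 0}\tfrac{1}{m!}\coeff(w_{i:j-1},X^m)$ is a finite sum and may be evaluated termwise using $\varphi_w(X^m)=\varphi_w(X)^m$. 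Since a strictly upper triangular matrix raised to the $m$-th power vanishes within distance $m$ of the diagonal, both entrywise sums truncate at the same place, and comparing them yields $\varphi_w(\ee^X)=\exp\varphi_w(X)$, with the matrix exponential computed in finitely many steps. The only genuine obstacle is the bookkeeping of empty factors in (ii); everything else is formal consequence plus the finite-support observations that make the infinite series interact correctly with the finite-size map $\varphi_w$.
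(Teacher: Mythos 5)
Your proposal is correct and follows essentially the same route as the paper: linearity entrywise, the matching of the matrix product with the coefficient convolution $\coeff(z,XY)=\sum_{z=uv}\coeff(u,X)\coeff(v,Y)$ (with the empty prefix/suffix supplied by the diagonal convention), and then the exponential identity via the homomorphism property on powers plus nilpotency of the strictly upper triangular matrix $\varphi_w(X)$. The only cosmetic difference is in handling the tail of the exponential series: the paper truncates globally at grade $q=\grade(w)$ and kills the remainder through $\varphi_w(\nR_{q+1})=0$, whereas you truncate entrywise by word length --- both are the same finiteness observation.
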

\begin{proof}
(i) trivial.
Ad (ii):
for $i<j$ it holds
\begin{align*}
\big(\varphi_w(X)\cdot&\varphi_w(Y)\big)_{i,j}
= \sum_{k=1}^{\ell(w)+1}\varphi_w(X)_{i,k}\,\varphi_w(Y)_{k,j} \\
&= \varphi_w(X)_{i,i}\,\varphi_w(Y)_{i,j}+
    \sum_{k=i+1}^{j-1}\varphi_w(X)_{i,k}\,\varphi_w(Y)_{k,j}+
    \varphi_w(X)_{i,j}\,\varphi_w(Y)_{j,j}\\
&=\coeff(\Id,X)\,\coeff(w_{i:j-1},Y)+
\sum_{k=i+1}^{j-1} \coeff(w_{i:k-1},Y)\,\coeff(w_{k:j-1},Y)\\
&\qquad+\coeff(\Id,X)\,\coeff(w_{i:j-1},Y)\\
&=\coeff(w_{i:j-1},X\cdot Y),
\end{align*}
and for the other cases,
$$\big(\varphi_w(X)\cdot\varphi_w(Y)\big)_{i,i} = \coeff(\Id,X)\,\coeff(\Id,Y)=
\coeff(\Id,X\cdot Y)$$
and $\big(\varphi_w(X)\cdot\varphi_w(Y)\big)_{i,j}=0$ for $i>j$.

Finally we prove (\ref{eq:exp_phi}).
Let $q=\grade(w)$ and $R\in\nR_{q+1}$ such that 
$\ee^X = \Id+X+\ldots+\tfrac{1}{q!}X^q+R.$ Then
\begin{align*}
\varphi_w(\exp X)&= \varphi_w\big(
\Id+X+\ldots+\tfrac{1}{q!}X^q+R
\big)\\
&=I_{\ell(w)+1}+\varphi_w(X)+\ldots+\tfrac{1}{q!}\varphi_w(X)^q+\underbrace{\varphi_w(R)}_{=0}\\
&=I_{\ell(w)+1}+\varphi_w(X)+\ldots+\tfrac{1}{\ell(w)!}\varphi_w(X)^{\ell(w)}\\
&=\exp\varphi_w(X),
\end{align*}
where we have used that $\varphi_w(X)$ is 
a strictly upper triangular matrix and thus
nilpotent of order
$\ell(w)+1$,
$\varphi_w(X)^{\ell(w)+1}=0$,
 and $\ell(w)\leq q=\grade(w)$. \qed
\end{proof}
For generators $g\in\nA$, $\nA=\{\AA,\BB\}$ or $\nA=\{\AA_1,\dots,\AA_K\}$
 it holds
\begin{equation*}
\varphi_w(g) = \mathrm{superdiag}(\gamma_1^{(w)},\ldots,\gamma_{\ell(w)}^{(w)})
=\left(
\begin{array}{ccccc}
0 & \gamma_1^{(w)} \\
 & 0 & \gamma_2^{(w)}\\
 &   & \ddots & \ddots \\
 &   & & 0 & \gamma_{\ell(w)}^{(w)} \\
 &   & &   & 0
\end{array}\right)
%\in\mathbb{C}^{(\ell(w)+1)\times(\ell(w)+1)}
\end{equation*}
with 
\begin{equation}\label{ex:gamma}
\gamma_j^{(w)}=\left\{\begin{array}{ll}
1 & \mbox{if $w_j=g$},\\
0 & \mbox{otherwise},
\end{array}\right.
\qquad j=1,\dots,\ell(w),\quad w=w_1\cdots w_{\ell(w)}.
\end{equation}
Starting from the values of $\varphi_w(g)$ for generators $g\in\nA$, %, $\nA=\{\AA,\BB\}$ or $\nA=\{\AA_1,\dots,\AA_K\}$, 
Theorem~\ref{thm:III} shows that
$\varphi_w(\Phi_j)$, $\Phi_j\in\gg=[\mathbb{C}\langle\nA\rangle]$ and further
$$\varphi_w(\ee^{\Phi_J}\cdots\ee^{\Phi_1})=\exp\big(\varphi_w(\Phi_J)\big)\cdots\exp\big(\varphi_w(\Phi_1)\big)$$
are well defined and can be effectively computed.
Extracting the entry in the right upper corner,
\begin{equation*}
\coeff(w,\ee^{\Phi_J}\cdots\ee^{\Phi_1}) = \varphi_w(\ee^{\Phi_J}\cdots\ee^{\Phi_1})_{1,\ell(w)+1}, %= \varphi_w(\ee^{\Phi_J}\cdots\ee^{\Phi_1})\cdot(0,\dots,0,1)^T,
\end{equation*}
leads to an efficient algorithm for the computation of $\coeff(w,\ee^{\Phi_J}\cdots\ee^{\Phi_1})$ in (\ref{eq:coeff_w_theta}).
To be more specific, the coefficient of a word $w\in\nA^*$ in an expression $X\in \mathbb{C}\langle\langle{\nA\rangle\rangle}$ can be calculated as
\begin{equation}\label{ex:first_component}
\coeff(w, X) = \text{first component of } \varphi(w, X,(0,\dots,0,1)^T),
\end{equation}
where the function 
\begin{equation*}
\varphi(w, X, y) = \varphi_w(X)\cdot y\in\mathbb{C}^{\ell(w)+1},\ X\in\mathbb{C}\langle\langle\nA\rangle\rangle,\ y=(y_0,y_1,\dots,y_{\ell(w)})^T\in\mathbb{C}^{\ell(w)+1}
\end{equation*}
can be evaluated recursively, 
\begin{equation}\label{eq:homv}
\varphi(w, X, y)=\left\{\begin{array}{ll}
\big(y_1\gamma^{(w)}_1,\dots,y_{\ell(w)}\gamma^{(w)}_{\ell(w)}, 0\big)^T, &                                  \mbox{if }X=g\in\nA,\ \gamma_j^{w} \mbox{ as in } (\ref{ex:gamma}), \\
\varphi(w,Y, y)+\varphi(w,Z, y), & \mbox{if }X=Y+Z, \\
\varphi(w,Y, \varphi(w, Z, y)), & \mbox{if }X=Y\cdot Z, \\
\varphi(w,Y\cdot Z-Z\cdot Y,y), & \mbox{if } X=[Y,Z], \\ 
\alpha\varphi(w, Y,y), & \mbox{if } X=\alpha Y,\ \alpha\in\mathbb{C},\footnotemark \\
\text{result of algorithm (\ref{eq:expY}) below},& \mbox{if }X=\exp(Y),\ \coeff(\Id,Y)=0.
\end{array}\right.
\end{equation}
\footnotetext{In an implementation of (\ref{eq:homv}) in a computer algebra system, the  case  $X=\alpha Y$ usually comprises not only  numbers $\alpha$, but also  symbols.}
Here, for $X=\exp(Y)$ the  following algorithm is applied:
\begin{equation}\label{eq:expY}
\begin{minipage}{0.72\textwidth}
\begin{tabbing}
 \qquad\= \kill
 {\bf input:} $w,Y,y$ \\
 {\bf output:} $z=\varphi(w,\exp(Y),y)$\\
  $h = y$;
  $z = y$;
  $\lambda = 1$\\
  {\bf for} $j=1:\ell(w)$ \\
\>  $\lambda = \lambda/j$\\
\>  $h = \varphi(w,Y, h)$\\
\>  $z = z +\lambda\cdot h$\\
  {\bf end }
\end{tabbing}
\end{minipage}
\end{equation}
It is clear that the algorithm (\ref{ex:first_component})--(\ref{eq:expY}) 
 can very easily be implemented in a computer algebra system.
 
\medskip

\noindent{\em Remark.} Considering the other components 
%of $\varphi(w, \ee^{\Phi_J}\cdots\ee^{\Phi_1},(0,\dots,0,1)^T)$ 
in (\ref{ex:first_component}) we obtain more generally the coefficients
$$\coeff(w_{j:\ell(w)}, \ee^{\Phi_J}\cdots\ee^{\Phi_1})=
\varphi(w, \ee^{\Phi_J}\cdots\ee^{\Phi_1},(0,\dots,0,1)^T)_j
$$
of all right factors $w_{j:\ell(w)}$ of $w$ in 
$\ee^{\Phi_J}\cdots\ee^{\Phi_1}$.
This can be exploited if {\em all}  Lyndon words $w\in \bigcup_{q=1}^p\nW_q$ of grade $q\leq p$
have to be considered as for example in Theorem~\ref{thm:order_conditions}. 
If $w\in\nW_q^{\{\AA,\BB\}}\setminus\{\AA\}$  
or $w\in\nW_q^{\{\AA_1,\dots,\AA_q\}}\setminus\{\AA_1\}$ 
is a Lyndon word of grade $q<p$, 
then respectively  $\underbrace{\AA\cdots\AA}_{p-q} w$ or $\underbrace{\AA_1\cdots\AA_1}_{p-q}w$ is a Lyndon word of grade $p$, and thus
$w$ is a proper right factor of a Lyndon word of grade $p$.
It follows that further efficiency can be gained by computing $\varphi(w,\ee^{\Phi_J}\cdots\ee^{\Phi_1},(0,\dots,0,1)^T)$ only 
for the Lyndon words $w\in\nW_p$ of grade exactly $p$, from which the coefficients 
$\coeff(w, \ee^{\Phi_J}\cdots\ee^{\Phi_1})$ for all $w\in \bigcup_{q=1}^p\nW_q$ 
($w\neq\AA$ and $w\neq\AA_1$) can be read off. The (very simple) exceptional cases $w=\AA$, $w=\AA_1$ 
have to be considered separately.

\section{Applications to (generalized) splitting methods}\label{sct:splitting}
\subsection{Exact solution}
We compute $\coeff(w, \ee^{\AA+\BB})$, where
$\ee^{\Omega}=\ee^{\AA+\BB}$ represents the exact solution operator for 
(\ref{eq:split_evolution_eq}).
This can be achieved by a very simple application of the homomorphisms $\varphi_w$.
Using $\varphi_w(\AA+\BB)=\varphi_w(\AA)+\varphi_w(\BB)=\mathrm{superdiag}(1,\dots,1)$
we obtain
\begin{align*}
\varphi_w(\ee^{\AA+\BB})
&=\exp\varphi_w(\AA+\BB)
=\exp\mathrm{superdiag}(1,\ldots,1)
%=\exp\left(
%\begin{array}{ccccc}
%0 & 1 \\
% & 0 & 1\\
% &   & \ddots & \ddots \\
% &   & & 0 & 1
%\end{array}\right)
=\left(\begin{array}{cccccc}
1 & 1 & \tfrac{1}{2} &  & \cdots & \tfrac{1}{\ell(w)!}\\
 & 1 & 1 &\tfrac{1}{2} \\
 &   & \ddots & \ddots & \ddots & \vdots\\
 &   & & 1 & 1 &\tfrac{1}{2}\\
 &   & &   & 1 & 1\\
 &   & &   &   & 1
\end{array}
\right),
\end{align*}
from which we read off
\begin{equation*}
\coeff(w, \ee^{\AA+\BB}) %= \coeff(w, \ee^{\AA+\BB}) 
%=\big(\varphi_w(\ee^{\AA+\BB})\big)_{1,\ell(w)+1}
=\frac{1}{\ell(w)!}, \quad w=w_1\cdots w_{\ell(w)}\in\nA^* \mbox{ for } \nA=\{\AA,\BB\}.
\end{equation*}

\subsection{Generalized fourth order splitting method}
In this subsection we derive the generalized fourth order splitting method
(\ref{eq:gs4}) and compute its  leading error term.
We make the ansatz
\begin{equation*}
\nS=\ee^{b\BB}\,\ee^{a\AA}\,\ee^{c\BB+d[\BB,[\AA,\BB]]}\,\ee^{a\AA}\,\ee^{b\BB},
\end{equation*}
which is symmetric in the sense of Section~\ref{subsct:symmetry}. The order
conditions for order $p=4$ are
\begin{equation*}
\coeff(w,\nS-\ee^{\AA+\BB})=0,\quad w\in\{\AA,\BB,\AA\AA\BB,\AA\BB\BB\},
\end{equation*}
where due to symmetry only Lyndon words of odd length $\leq 4$ have to be
considered, cf.\ Theorem~\ref{thm:order_conditions}. An application of   
algorithm (\ref{ex:first_component})--(\ref{eq:expY})
yields 4 equations
\begin{align*}
&2a-1=0, \quad
 2b+c-1=0, \\                   
& 2a^2b+\tfrac{1}{2}a^2c-\tfrac{1}{6}=0, \quad
 ab^2+\tfrac{1}{2}ac^2+abc-d-\tfrac{1}{6}=0
\end{align*} 
in 4 variables $a,b,c,d$, which have the unique solution
\begin{equation*}
a=\tfrac{1}{2}, \quad
b=\tfrac{1}{6}, \quad
c=\tfrac{2}{3}, \quad
d=\tfrac{1}{72},
\end{equation*}
i.e.,
\begin{equation*}
\nS=\ee^{\frac{1}{6}\BB}\ee^{\frac{1}{2}\AA}\ee^{\frac{2}{3}\BB+\frac{1}{72}[\BB,[\AA,\BB]]}\ee^{\frac{1}{2}\AA}\ee^{\frac{1}{6}\BB},
\end{equation*}
which confirms (\ref{eq:gs4}).

To determine the leading local error term 
%$\Theta$ (cf.\ Theorem~\ref{thm:I_selfadjoint}) 
we compute
the coefficients of the Lyndon words of length $p+1=5$,
$$
\mathbf{c}_{\nW_5}=\left(\begin{array}{c}
c_{\AA\AA\AA\AA\BB}\\
c_{\AA\AA\AA\BB\BB}\\
c_{\AA\AA\BB\AA\BB}\\
c_{\AA\AA\BB\BB\BB}\\
c_{\AA\BB\AA\BB\BB}\\
c_{\AA\BB\BB\BB\BB}
\end{array}\right)=
\left(\begin{array}{c}
\frac{1}{2880} \\[2pt]
  \frac{-7}{8640} \\[2pt]  
   \frac{1}{480}   \\[2pt]
   \frac{7}{12960} \\[2pt]
  \frac{-1}{720}   \\[2pt]
 \frac{-41}{155520}
\end{array}\right),
\quad
\mbox{where }c_w=\coeff(w,\nS-\ee^{\AA+\BB}),\footnote{For the local error measure (\ref{eq:lem}) we obtain
$\mathrm{LEM}=\|\mathbf{c}_{\nW_5} \|_2 \doteq 0.002721$ which is an order of magnitude smaller than the  
local error measures of the best 4th order classical (i.e., commutator-free) splitting methods involving, e.g., 
10 exponentials, cf.\ \cite[Sections~4, 5]{part1}.
}
$$
from which, via (\ref{eq:coeff_vectors}), we obtain
$$
\mathbf{c}_{\nB_5}=\left(\begin{array}{c}
c_{[\AA,[\AA,[\AA,[\AA,\BB]]]]}\\
c_{[\AA,[\AA,[[\AA,\BB],\BB]]]}\\
c_{[[\AA,[\AA,\BB]],[\AA,\BB]]}\\
c_{[\AA,[[[\AA,\BB],\BB],\BB]]}\\
c_{[[\AA,\BB],[[\AA,\BB],\BB]]}\\
c_{[[[[\AA,\BB],\BB],\BB],\BB]}
\end{array}\right)= T_5^{-1}\cdot \mathbf{c}_{\nW_5}=
\left(\begin{array}{c}
\frac{1}{2880} \\[2pt]
  \frac{-7}{8640} \\[2pt]  
   \frac{1}{2160}   \\[2pt]
   \frac{7}{12960} \\[2pt]
  \frac{1}{4320}   \\[2pt]
 \frac{-41}{155520}
\end{array}\right),
$$
and thus (substituting $\AA\to\tau A$, $\BB\to\tau B$) the representation of
the local error of (\ref{eq:gs4}),
\begin{align*}
\nL(\tau)&
=\ee^{\frac{1}{6}\tau B}\,\ee^{\frac{1}{2}\tau A}\,\ee^{\frac{2}{3}\tau B+\frac{1}{72}\tau^3[B,[A,B]]}
	\,\ee^{\frac{1}{2}\tau A}\,\ee^{\frac{1}{6}\tau B}
	\ - \ \ee^{\tau(A+B)}\\
	&=\tau^5\Big(
	        \tfrac{1}{2880}[A,[A,[A,[A,B]]]]-\tfrac{7}{8640}[A,[A,[[A,B],B]]]\\
	&\qquad+\tfrac{1}{2160}[[A,[A,B]],[A,B]]+\tfrac{7}{12960}[A,[[[A,B],B],B]]\\
	&\qquad+\tfrac{1}{4320}[[A,B],[[A,B],B]]-\tfrac{41}{155520}[[[[A,B],B],B],B]\Big) + \Order(\tau^6).
\end{align*}

For theoretical (and aesthetic) reasons it might be favorable to represent the
leading error term in the right normed basis 
\begin{align*}
\widetilde{\nB}_5 = \big\{\ &
[\AA,[\AA,[\AA,[\AA,\BB]]]],\
[\BB,[\AA,[\AA,[\AA,\BB]]]],\
[\AA,[\AA,[\BB,[\AA,\BB]]]],\\
&[\BB,[\AA,[\BB,[\AA,\BB]]]],\
[\AA,[\BB,[\BB,[\AA,\BB]]]],\
[\BB,[\BB,[\BB,[\AA,\BB]]]]
\ \big\}
\end{align*}
instead of the Lyndon basis $\nB_5$.
A procedure of constructing such bases is given in  \cite{CHIBRIKOV2006593}.
Instead of (\ref{eq:T_splitting}) the transformation matrix is now
$$\widetilde{T}_5=T_{\widetilde{\nB}_{5}\to\nW_{5}} = \big(\coeff(w,b)\big)_{w\in\nW_5,b\in\widetilde{\nB}_5}=
\left(
%\begin{array}{rrrrrr}
% 1 &  0 &  0 &  0 &  0 &  0\\
% 0 & -1 & -1 &  0 &  0 &  0\\
% 0 &  3 &  2 &  0 &  0 &  0\\
% 0 &  0 &  0 &  1 &  1 &  0\\
% 0 &  0 &  0 & -2 & -3 &  0\\
% 0 &  0 &  0 &  0 &  0 & -1
%\end{array}
\begin{array}{rrrrrr}
 1 &    &    &    &    &   \\
   & -1 & -1 &    &    &   \\
   &  3 &  2 &    &    &   \\
   &    &    &  1 &  1 &   \\
   &    &    & -2 & -3 &   \\
   &    &    &    &    & -1
\end{array}
\right),
$$
where the entries can  be computed using 
algorithm (\ref{ex:first_component})--(\ref{eq:expY}).
We obtain 
$$
\mathbf{c}_{\widetilde{\nB}_5}=\left(\begin{array}{c}
c_{[\AA,[\AA,[\AA,[\AA,\BB]]]]}\\
c_{[\BB,[\AA,[\AA,[\AA,\BB]]]]}\\
c_{[\AA,[\AA,[\BB,[\AA,\BB]]]]}\\
c_{[\BB,[\AA,[\BB,[\AA,\BB]]]]}\\
c_{[\AA,[\BB,[\BB,[\AA,\BB]]]]}\\
c_{[\BB,[\BB,[\BB,[\AA,\BB]]]]}
\end{array}\right)= \widetilde{T}_5^{-1}\cdot \mathbf{c}_{\nW_5}=
\left(\begin{array}{c}
  \frac{1}{2880} \\[2pt]
  \frac{1}{2160}  \\[2pt]
  \frac{1}{2880}  \\[2pt]
  \frac{1}{4320}  \\[2pt]
  \frac{1}{3240}  \\[2pt]
 \frac{41}{155520}
\end{array}
\right)
$$
and thus
\begin{align*}
\nL(\tau)&=
	\tau^5\Big(\tfrac{1}{2880}[A,[A,[A,[A,B]]]]+\tfrac{1}{2160}[B,[A,[A,[A,B]]]]\\
	&\qquad+\tfrac{1}{2880}[A,[A,[B,[A,B]]]]+\tfrac{1}{4320}[B,[A,[B,[A,B]]]] \\
	&\qquad+\tfrac{1}{3240}[A,[B,[B,[A,B]]]]+\tfrac{41}{155520}[B,[B,[B,[A,B]]]]\Big)\ + \Order(\tau^6).
\end{align*}	

%\begin{align*}
%\Theta =&\ \ \ \
%\tfrac{1}{2880}[\AA,[\AA,[\AA,[\AA,\BB]]]]+
%\tfrac{1}{2160} [\BB,[\AA,[\AA,[\AA,\BB]]]]+
%\tfrac{1}{2880}[\AA,[\AA,[\BB,[\AA,\BB]]]] \\
%&+\tfrac{1}{4320} [\BB,[\AA,[\BB,[\AA,\BB]]]]+
%\tfrac{1}{3240}[\AA,[\BB,[\BB,[\AA,\BB]]]]+
%\tfrac{41}{155520}[\BB,[\BB,[\BB,[\AA,\BB]]]],
%\end{align*}
%which verifies the representation (\ref{eq:gen_split_4}).

\section{Applications to Magnus-type methods}\label{sct:magnus}
\subsection{Exact solution}
For non-autonomous problems (\ref{eq:non_auto_evolution_eq}) it is not as straightforward as for the
splitting case
to provide an explicit formula
for $\mathrm{coeff}(w,\ee^{\Omega})$, where $\ee^{\Omega}$ represents the exact solution operator for (\ref{eq:non_auto_evolution_eq}).
Of course, using algorithm (\ref{ex:first_component})--(\ref{eq:expY}), we could compute  $\mathrm{coeff}(w,\ee^{\Omega})$  
from the Magnus series (\ref{eq:magnus_series}). However, we prefer to derive  an explicit formula for
$\mathrm{coeff}(w,\ee^{\Omega})$ which is not based on an explicit representation of the 
Magnus series $\Omega$. 

\begin{theorem}
Let $w=\AA_{d_1}\cdots\AA_{d_\ell}$ be a word over the alphabet $\{\AA_1,\AA_2,\dots\}$,
where the $\AA_k\simeq A_k$ represent the Legendre coefficients $A_k$ from (\ref{eq:A_expansion}). Then the coefficient of $w$ in $\ee^\Omega$ representing the
exact solution operator for (\ref{eq:non_auto_evolution_eq}) is given by 
\begin{equation}\label{eq:coeff_rhs_magnus_type}
\mathrm{coeff}(\AA_{d_1}\cdots\AA_{d_\ell},\ee^{\Omega}) %=\mathrm{coeff}(w_1\dots w_{n_w},\ee^{\Omega})
=\sum_{(k_1,\dots,k_{\ell})\atop 1\leq k_l\leq d_l}
\prod_{j=1}^{\ell}\frac{
(-1)^{d_j+k_j}{d_j-1 \choose k_j-1}{d_j+k_j-2 \choose k_j-1}
}{\sum_{i=j}^{\ell}k_i}.
\end{equation}
\end{theorem}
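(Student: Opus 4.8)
The plan is to identify $\ee^{\Omega}$ with the exact solution operator of (\ref{eq:non_auto_evolution_eq}) and to read off the desired word coefficient from its time-ordered (Dyson/Picard) series. By the defining property of the Magnus expansion, $\ee^{\Omega}$ equals the value $U(\tau)$ at time $\tau$ of the solution of $U'(t)=A(t_n+t)U(t)$, $U(0)=\Id$, and Picard iteration gives the formal series
\[
U(\tau)=\sum_{\ell\ge 0}\ \int_{\tau\ge s_1\ge\cdots\ge s_\ell\ge 0}A(t_n+s_1)\cdots A(t_n+s_\ell)\,\dd s_\ell\cdots\dd s_1,
\]
with the latest time placed in the leftmost factor. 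Substituting the Legendre expansion (\ref{eq:A_expansion}) into each factor and treating the $A_k$ as the free non-commuting generators $\AA_k$, the word $w=\AA_{d_1}\cdots\AA_{d_\ell}$ of letter-length $\ell$ is produced \emph{only} by the $\ell$-fold term, choosing the Legendre index $d_j-1$ in the $j$-th factor. Hence $\coeff(w,\ee^{\Omega})=\int_{\tau\ge s_1\ge\cdots\ge s_\ell\ge 0}\prod_{j=1}^{\ell}\tilde P_{d_j-1}(s_j)\,\dd s$. Rescaling $s_j=\tau x_j$ and using $\tilde P_k(\tau x)=\tfrac1\tau P_k(x)$, the powers of $\tau$ cancel in pairs, confirming that the coefficient is the $\tau$-independent rational number claimed and leaving
\[
\coeff(\AA_{d_1}\cdots\AA_{d_\ell},\ee^{\Omega})=\int_{1\ge x_1\ge\cdots\ge x_\ell\ge 0}\prod_{j=1}^{\ell}P_{d_j-1}(x_j)\,\dd x.
\]

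Next I would insert the explicit monomial form of the shifted Legendre polynomials. Writing (\ref{eq:legendre_zeugs}) for $P_{d_j-1}$ and reindexing the exponent via $k_j=(\text{exponent})+1$, one gets
\[
P_{d_j-1}(x_j)=\sum_{k_j=1}^{d_j}(-1)^{d_j+k_j}\binom{d_j-1}{k_j-1}\binom{d_j+k_j-2}{k_j-1}\,x_j^{\,k_j-1},
\]
which reproduces exactly the binomial factors appearing in (\ref{eq:coeff_rhs_magnus_type}). Interchanging the finite sums with the integral then reduces the theorem to evaluating, for each multi-index $(k_1,\dots,k_\ell)$ with $1\le k_j\le d_j$, the monomial simplex integral $\int_{1\ge x_1\ge\cdots\ge x_\ell\ge 0}\prod_j x_j^{\,k_j-1}\,\dd x$.

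Finally I would evaluate this monomial integral from the innermost variable outward: integrating $x_\ell^{\,k_\ell-1}$ over $[0,x_{\ell-1}]$ yields $x_{\ell-1}^{\,k_\ell}/k_\ell$, which merges with $x_{\ell-1}^{\,k_{\ell-1}-1}$, and induction on $\ell$ gives
\[
\int_{1\ge x_1\ge\cdots\ge x_\ell\ge 0}\prod_{j=1}^{\ell}x_j^{\,k_j-1}\,\dd x=\prod_{j=1}^{\ell}\frac{1}{\sum_{i=j}^{\ell}k_i},
\]
the $j$-th factor arising when the accumulated power $\sum_{i=j}^{\ell}k_i-1$ is integrated over the $j$-th variable. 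Substituting this back and combining with the binomial prefactors yields (\ref{eq:coeff_rhs_magnus_type}) verbatim. The iterated integration is entirely routine; the one step demanding care is the first, namely justifying the Dyson-series representation of $\ee^{\Omega}$ at the purely formal power-series level (so that reading off a word coefficient is legitimate) and verifying the clean $\tau$-cancellation under $s_j=\tau x_j$. I expect that identification to be the main obstacle, since everything downstream is elementary combinatorics.
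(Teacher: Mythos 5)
Your proposal is correct, but it is organized differently from the paper's proof. The paper never writes down the Dyson/Picard series or any simplex integral: it instead defines the auxiliary quantities $\hat{A}_k$ in (\ref{eq:hatAk}) (which are precisely the scaled Taylor coefficients of $A(t)$, with the same binomial factors that appear in (\ref{eq:coeff_rhs_magnus_type})), expands the exact solution in words over the $\hat{A}_k$, and then derives the key product $\prod_{j=1}^{\ell}\bigl(\sum_{i=j}^{\ell}k_i\bigr)^{-1}$ purely algebraically, by matching powers of $t$ in the two expressions for $u'(t)=A(t)u(t)$; finally it substitutes (\ref{eq:hatAk}) back to return to the Legendre coefficients. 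You reach the same product formula by evaluating the monomial simplex integral inside the time-ordered series, after expanding each $P_{d_j-1}$ via (\ref{eq:legendre_zeugs}); the combinatorial core is identical (your iterated integration of the accumulated power is exactly the paper's recursion $c_{\hat{w}}=\bigl(\sum_{i=1}^{\ell}k_i\bigr)^{-1}c_{\hat{w}_{2:\ell}}$), but your route additionally establishes, as an intermediate step, the integral representation that the paper only states afterwards as (\ref{eq:l_fold_integral}). What the paper's arrangement buys is that it stays entirely within formal power-series manipulations, consonant with its formal framework; what your arrangement buys is directness and the integral formula for free. Concerning the obstacle you flag: identifying $\ee^{\Omega}$ with the solution operator is not an extra burden of your approach specifically, since the paper's own proof rests on exactly the same identification ($u(\tau)=\ee^{\Omega}u(0)$ together with $u'(t)=A(t)u(t)$); once that is granted, uniqueness of the formal solution of the linear initial value problem legitimizes the Picard series, so your argument is complete.
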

\begin{proof}
Define
\begin{equation}\label{eq:hatAk}
\hat{A}_k=\sum_{d\geq k}(-1)^{d+k}{d-1 \choose k-1}{d+k-2 \choose k-1}A_d
, \quad k=1,2,\dots.
\end{equation}
Then from (\ref{eq:A_expansion}) and (\ref{eq:legendre_zeugs}) it follows
\begin{equation}\label{eq:A_hat_expansion}
A(t) = \sum_{d\geq 1} \widetilde{P}_{d-1}(t)A_d = \frac{1}{\tau}\sum_{k\geq 1}\left(\frac{t}{\tau}\right)^{k-1}\hat{A}_k,
\end{equation}
which is a Taylor expansion of $A(t)$ and thus
\begin{equation}\label{eq:taylor_coeff}
\hat{A}_k=\hat{A}_k(\tau)=\frac{\tau^k}{(k-1)!}\left.\frac{d^{k-1}}{dt^{k-1}}A(t)\right|_{t=0}, \quad k=1,2,\dots.
\end{equation}
It is easy to see that the relations (\ref{eq:hatAk}) can be inverted such that the $A_d$ can be written as (finite) linear combinations of the $\hat{A}_k$. Substituting these representations of the
$A_d$ in 
\begin{equation}\label{eq:exact_solution_expandion}
u(\tau)=\ee^{\Omega}u(0)= \sum_{w=A_{d_1}\cdots A_{d_\ell}}c_{w}\,A_{d_1}\cdots A_{d_\ell}\,u(0),\quad c_{w}=\coeff( A_{d_1}\cdots A_{d_\ell},\,\ee^\Omega)
\end{equation}
and expanding we obtain
$$
u(\tau)= \sum_{\hat{w}=\hat{A}_{k_1}\cdots \hat{A}_{k_\ell}}c_{\hat{w}}\,\hat{A}_{k_1}\cdots \hat{A}_{k_\ell}\,u(0)%,\quad c_{\hat{w}}=\coeff( \hat{A}_{k_1}\cdots \hat{A}_{k_\ell},\,\ee^\Omega)
$$
with well-defined coefficients $c_{\hat{w}}$.
Using $\hat{A}_k(t)=(\frac{t}{\tau})^k\hat{A}_k(\tau)$ (cf.~(\ref{eq:taylor_coeff})) we obtain an
expansion of the exact solution
of (\ref{eq:non_auto_evolution_eq}),
$$
u(t) = \sum_{\hat{w}=\hat{A}_{k_1}\cdots \hat{A}_{k_\ell}}\left(\frac{t}{\tau}\right)^{\sum_{j=1}^{\ell} k_j}\!c_{\hat{w}}\,\hat{A}_{k_1}\cdots \hat{A}_{k_\ell}\,u(0),\quad t\geq 0
\quad (\hat{A}_{k_j}=\hat{A}_{k_j}(\tau)),
$$
such that on one hand
$$
u'(t) = \frac{1}{\tau}\sum_{\hat{w}=\hat{A}_{k_1}\cdots \hat{A}_{k_\ell}}\left(\frac{t}{\tau}\right)^{\sum_{j=1}^\ell k_j-1}\left(\sum_{j=1}^\ell k_j\right)c_{\hat{w}}\,\hat{A}_{k_1}\cdots \hat{A}_{k_\ell}\,u(0),
$$
and on the other hand, using (\ref{eq:A_hat_expansion}),
\begin{align*}
u'(t)=A(t)u(t)&=\frac{1}{\tau}\left(\sum_{k_1\geq 1}\left(\frac{t}{\tau}\right)^{k_1-1}\!\hat{A}_{k_1}\right)\cdot\left(\sum_{\hat{v}=\hat{A}_{k_2}\cdots \hat{A}_{k_\ell}}\!\!\!\left(\frac{t}{\tau}\right)^{\sum_{j=2}^{\ell} k_j}c_{\hat{v}}\,\hat{A}_{k_2}\cdots \hat{A}_{k_\ell}\right)u(0)\\
&=\frac{1}{\tau} \sum_{\hat{w}=\hat{A}_{k_1}\cdots \hat{A}_{k_\ell}}
\!\!\!\!\left(\frac{t}{\tau}\right)^{\sum_{j=1}^\ell k_j-1}\!c_{\hat{w}_{2:\ell}}\,\hat{A}_{k_1}\cdots \hat{A}_{k_\ell}\, u(0).
\end{align*}
Comparing corresponding coefficients of $\hat{w}= \hat{A}_{k_1}\cdots \hat{A}_{k_\ell}$ in the two expressions for $u'(t)$ we
obtain recursively
$$
c_{\hat{w}}=\frac{1}{\sum_{i=1}^{\ell}k_i}c_{\hat{w}_{2:\ell}}=
\frac{1}{\sum_{i=1}^{\ell}k_i}\cdot\frac{1}{\sum_{i=2}^{\ell}k_i}c_{\hat{w}_{3:\ell}}=
\ldots =
\prod_{j=1}^{\ell}\frac{1}{\sum_{i=j}^{\ell}k_i}.
$$
Substituting (\ref{eq:hatAk}) in 
$$ u(\tau)=
\sum_{\hat{w}=\hat{A}_{k_1}\cdots \hat{A}_{k_\ell}}c_{\hat{w}}\hat{A}_{k_1}\cdots \hat{A}_{k_\ell}\,u(0)=
\sum_{k_1\geq 1,\dots, k_\ell\geq 1}\prod_{j=1}^{\ell}
\frac{\hat{A}_j}{\sum_{i=j}^{\ell}k_i}u(0)
$$
and comparing coefficients with (\ref{eq:exact_solution_expandion})
we obtain (\ref{eq:coeff_rhs_magnus_type}). \qed
\end{proof}

\begin{table}[t!]
\begin{center}
\begin{tabular}{|c|c|c|cc|ccc|}%|cccccc|}
\hline
$q$ & 1 & 2 & 3 & &  4 & & %& %5 & & & & & 
\\
\hline
$w$                    & $\AA_1$ & $\AA_2$ & $\AA_1\AA_2$ &  $\AA_3$ & $\AA_1\AA_1\AA_2$ & $\AA_1\AA_3$ & $\AA_4$  %&$\AA_1\AA_1\AA_1\AA_2$ & $\AA_1\AA_1\AA_3$ & $\AA_1\AA_2\AA_2$ & $\AA_1\AA_4$ & $\AA_2\AA_3$ & $\AA_5$
\\
$\coeff(w,\ee^\Omega)$& $1$     & $0$     & $-\frac{1}{6}$ &  $0$     & $-\frac{1}{12}$ &  $0$         & $0$   % &$-\frac{1}{40}$ &
% $\frac{1}{60}$ &
%  $\frac{1}{60}$ & 
%  $0$ &
% $-\frac{1}{30}$ & 
%5  $0$ 
\\
\hline  
\end{tabular}

\vspace{1mm}

\begin{tabular}{|c|cccccc|}
\hline
$q$ & 5 & & & & & \\  
\hline
$w$ &$\AA_1\AA_1\AA_1\AA_2$ & $\AA_1\AA_1\AA_3$ & $\AA_1\AA_2\AA_2$ & $\AA_1\AA_4$ & $\AA_2\AA_3$ & $\AA_5$ \\
$\coeff(w,\ee^\Omega)$&
$-\frac{1}{40}$ &
 $\frac{1}{60}$ &
  $\frac{1}{60}$ & 
  $0$ &
 $-\frac{1}{30}$ & 
  $0$ \\
\hline
\end{tabular}
\caption{Coefficients $\coeff(w,\ee^\Omega)$ of Lyndon words $w$ of grade $q \leq 5$ in the exact solution operator $\ee^\Omega$.
\label{tab:rhs_magnus_type}
}
\end{center}
\end{table}
Some values of (\ref{eq:coeff_rhs_magnus_type}) are shown in Table~\ref{tab:rhs_magnus_type}. 
Using (\ref{eq:legendre_zeugs}) it is easy to  see that it holds
\begin{equation}\label{eq:l_fold_integral}
\mathrm{coeff}(\AA_{d_1}\cdots\AA_{d_\ell},\ee^{\Omega}) 
=\int_0^1\int_0^{x_1}\cdots \int_0^{x_{\ell-1}}P_{d_1-1}(x_1)\cdots P_{d_\ell-1}(x_\ell)\, \dd x_\ell\cdots \dd x_2\, \dd x_1.
\end{equation}
From the orthogonality of the Legendre polynomials it follows that 
$\mathrm{coeff}(\AA_{d_1}\cdots\AA_{d_\ell},\ee^{\Omega})$ vanishes if some index $d_j$ exceeds
the sum of the others by at least two, see \cite[Section~3.2]{alvfeh11} where the $\ell$-fold integral (\ref{eq:l_fold_integral}) is denoted $\xi(d_1,\dots, d_\ell)$.
In particular,
\begin{equation*}
\coeff(w,\ee^\Omega)=0,\quad \mbox{if $\grade(w)\leq p$ and $w$ contains $\AA_d$ with $d\geq\tfrac{p}{2}+1$},
\end{equation*}
which implies that
if a scheme $\nS=\ee^{\Phi_J}\cdots\ee^{\Phi_1}$ does not involve any generator $A_d$ with 
$d\geq\tfrac{p}{2}+1$, then the order conditions (\ref{eq:order_conditions}) of Theorem~\ref{thm:order_conditions} are automatically satisfied for all Lyndon words $w$ of grade $\leq p$ which contain such an 
$\AA_d$. Thus, the number of order conditions to be considered for such schemes is 
significantly reduced.
However, for $p$ even, the coefficients 
\begin{equation*}
c_w=%\coeff(w,\Theta) %=
 \coeff(w,\nS-\ee^\Omega)=-\coeff(w,\ee^\Omega),\quad w\in\nW_{p+1},\ w\text{ contains }\AA_{\frac{p}{2}+1}
\end{equation*}
in the leading local error term
do not necessarily vanish, which implies
a lower bound 
\begin{equation}\label{eq:lem_bound}
\mathrm{LEM}\geq\sqrt{\sum_{w\in\nW_{p+1},\ w\text{ contains }\AA_{\frac{p}{2}+1}}|c_w|^2}
\qquad\text{($p$ even)}
\end{equation}
for the local error measure (\ref{eq:lem}) of
a scheme not involving $\AA_{\frac{p}{2}+1}$. Therefore, an optimized scheme of even 
order $p$ with a LEM below this bound must necessarily involve the generator $\AA_{\frac{p}{2}+1}$, see also \cite[Section~5]{alvfeh11}.

\subsection{Fourth order commutator-free Magnus-type integrator}\label{subsct:cf4}
In this subsection we derive the fourth order Magnus-type integrator
(\ref{eq:comfree4}) and compute its  leading error term.
We make the ansatz
\begin{equation*}
\nS=\ee^{f_1\AA_1-f_2\AA_2}\,\ee^{f_1\AA_1+f_2\AA_2},
\end{equation*}
which is symmetric in the sense of Section~\ref{subsct:symmetry} and does not involve 
$\AA_3$ or $\AA_4$.
%This ansatz
%without $\AA_3$ or $\AA_4$ is reasonable because
%it holds $\coeff(w, \ee^{\Omega})=0$ for $w\in\{\AA_3,\, \AA_1\AA_3,\,\AA_4\}$, the Lyndon words  of  grade $\leq 4$ containing one of the generators $\AA_3$, $\AA_4$
%Due to symmetry only Lyndon words of odd grade have to be considered, 
%cf.\ Theorem~\ref{thm:order_conditions}.
%The order
%conditions for order $p=4$ corresponding to the Lyndon words of odd grade not containing 
%$\AA_3$ or $\AA_4$ are
%\begin{equation*}
%\coeff(w,\nS-\ee^{\Omega})=0,\quad w\in\{\AA_1,\, \AA_1\AA_2\}.
%\end{equation*}
The order conditions for order $p=4$ are 
\begin{equation*}
\coeff(w,\nS-\ee^{\Omega})=0,\quad w\in\{\AA_1,\, \AA_1\AA_2\}, 
\end{equation*}
where only Lyndon words 
of odd grade $\leq 4$ (due to symmetry, cf.\ Theorem~\ref{thm:order_conditions}) not containing
$\AA_3$ or $\AA_4$ have to be considered.
An application of   
algorithm (\ref{ex:first_component})--(\ref{eq:expY})
yields the equations
\begin{equation*}
2f_1-1=0, \quad
 f_1f_2+\tfrac{1}{6}=0
\end{equation*} 
with  solution
$f_1 =\tfrac{1}{2}$,
$f_2=-\tfrac{1}{3}$,
i.e.,
\begin{equation}\label{eq:comfree4formal}
\nS=\ee^{\frac{1}{2}\AA_1+\frac{1}{3}f_2\AA_2}\,\ee^{\frac{1}{3}\AA_1-\frac{1}{3}\AA_2},
\end{equation}
which confirms (\ref{eq:comfree4}).

To determine the leading local error term we compute, using   
algorithm (\ref{ex:first_component})--(\ref{eq:expY})
and then (\ref{eq:coeff_vectors}),
\begin{equation*}
\mathbf{c}_{\nW_5}=
\left(\begin{array}{c}
c_{\AA_1\AA_1\AA_1\AA_2}\\
c_{\AA_1\AA_1\AA_3}\\
c_{\AA_1\AA_2\AA_2}\\
c_{\AA_1\AA_4}\\
c_{\AA_2\AA_3}\\
c_{\AA_5}
\end{array}\right)
=
\left(\begin{array}{c}
\frac{1}{1440}\\[2pt]
\frac{-1}{60}\\[2pt]
\frac{1}{540}\\[2pt]
0\\
\frac{1}{30}\\[2pt]
0
\end{array}\right)\ \mbox{and}\ \
\mathbf{c}_{\nB_5}=
\left(\begin{array}{c}
c_{[\AA_1,[\AA_1,[\AA_1,\AA_2]]]}\\
c_{[\AA_1,[\AA_1\AA_3]]}\\
c_{[[\AA_1,\AA_2],\AA_2]}\\
c_{[\AA_1,\AA4]}\\
c_{[\AA_2,\AA_3]}\\
c_{\AA_5}
\end{array}\right)=\underbrace{T_5^{-1}}_{=I_6}\cdot\mathbf{c}_{\nW_5}=\mathbf{c}_{\nW_5}.
\end{equation*}
For the local error $\nL(\tau,t_n)=\nS(\tau,t_n)-\ee^{\Omega}$ of (\ref{eq:comfree4}) we thus obtain
\begin{align*}
\nL(\tau, t_n)&=\ee^{\frac{1}{2}A_1+\frac{1}{3}A_2}\,\ee^{\frac{1}{2}A_1-\frac{1}{3}A_2} -
\ee^{\Omega}\\
&=\tfrac{1}{1440}[A_1,[A_1,[A_1,A_2]]]-\tfrac{1}{60}[A_1,[A_1,A_3]]\\
&\quad+\tfrac{1}{540}[[A_1,A_2],A_2]+\tfrac{1}{30}[A_2,A_3]\ + \ \Order(\tau^6).
\end{align*}	
For the local error measure (\ref{eq:lem}) we obtain $\mathrm{LEM}=\Vert\mathbf{c}_{\nW_5}\Vert\doteq 0.03732$, which
is only insignificantly larger than the lower bound
$\sqrt{c_{\AA_1\AA_1\AA_3}^2+c_{\AA_2\AA_3}^2}\doteq 0.03727$ of (\ref{eq:lem_bound}).

In numerical applications the $A_k$ form (\ref{eq:A_integral}) have to be approximated by an appropriate 
quadrature formula. 
Thus we substitute 
\begin{equation}\label{eq:Al_substitution}
\AA_l\to(2l-1)\tau\sum_{k=1}^{K}w_kP_{l-1}(x_k)A(t_n+\tau x_k)
\end{equation}
in (\ref{eq:comfree4formal}) with nodes and weights of Gaussian quadrature of order four,
$$
(x_k)=\left(\tfrac{1}{2}-\tfrac{\sqrt{3}}{6},\ \tfrac{1}{2}-\tfrac{\sqrt{3}}{6}\right),
\quad (w_k)=\left(\tfrac{1}{2},\ \tfrac{1}{2}\right),
$$
%are nodes and weights of Gaussian quadrature of order four, 
and obtain the
commutator-free
Magnus-type integrator in terms of the system matrix $A(t)$ evaluated at different times,
\begin{equation*}
\nS_{A}(\tau, t_n)=\ee^{\tau a_{21}A(t_n+\tau x_1)+\tau a_{22}A(t_n+\tau x_2)}\,
\ee^{\tau a_{11}A(t_n+\tau x_1)+\tau a_{12}A(t_n+\tau x_2)}
\end{equation*}
with coefficients
$$
(a_{jk})=\left(\begin{array}{cc}
\frac{1}{4}+\frac{\sqrt{3}}{6} &\ \frac{1}{4}-\frac{\sqrt{3}}{6} \\[2pt]
\frac{1}{4}-\frac{\sqrt{3}}{6} &\ \frac{1}{4}+\frac{\sqrt{3}}{6}
\end{array}\right),
$$
cf.\ \cite[Section~7]{alvfeh11}.

\subsection{6th order Magnus-type integrator involving commutator}
We work through analogous steps as in the previous Subsection~\ref{subsct:cf4}.
To derive the Magnus-type integrator (\ref{eq:unconv_scheme}), (\ref{eq:coeff_unconv})
from \cite[Section~4.3]{SergioFernandoMPaper2}
 we
make the symmetric ansatz
\begin{align*}
\nS&=% \ee^{\Phi_5}\ee^{\Phi_4}\ee^{\Phi_3}\ee^{\Phi_2}\ee^{\Phi_1}\\
\ee^{f_{11}\AA_1-f_{12}\AA_2+f_{13}\AA_3}\,
\ee^{f_{21}\AA_1-f_{22}\AA_2+f_{23}\AA_3}\,
\ee^{[g_1\AA_1+g_3A_3,\AA_2]}\\
&\quad\times \ee^{f_{21}\AA_1+f_{22}\AA_2+f_{23}\AA_3}\,
\ee^{f_{11}\AA_1+f_{12}\AA_2+f_{13}\AA_3}.
\end{align*}
%It involves the generators $\AA_1$, $\AA_2$, $\AA_3$ only, which is appropriate to  $\coeff(x,\ee^\Omega)=0$ for
%the Lyndon words $w\in\{\AA_4,\, \AA_1\AA_4,\, \AA_5,\, \AA_1\AA_1\AA_4,\, \AA_1\AA_5,\, \AA_2\AA_4,\, \AA_6\}$ of order $\leq 6$ containing $\AA_4$, $\AA_5$, or $\AA_6$.
Corresponding to the Lyndon words of odd grade $\leq 6$ not containing $\AA_4$, $\AA_5$, $\AA_6$
we obtain the order conditions
\begin{equation*}%\label{eq:llxx}
\coeff(w,\nS-\ee^\Omega)=0,\quad w\in\{\AA_1,\
 \AA_1\AA_2,\ \AA_3,\ \AA_1\AA_1\AA_1\AA_2, \ \AA_1\AA_1\AA_3, \ \AA_1\AA_2\AA_2,\ \AA_2\AA_3 \}
\end{equation*}
for order $p=6$.
An application of   
algorithm (\ref{ex:first_component})--(\ref{eq:expY})
yields 7 equations
\begin{align*}
&2 f_{11}+2 f_{21}-1=0,\\                                                                                                                                                                                                                                                                                    
& f_{11} f_{12}+2 f_{12} f_{21}+f_{21} f_{22}+g_{1}+\tfrac{1}{6}=0,\\                                                                                                                                                                                                                                                                
& 2 f_{13}+2 f_{23}=0,\\                                                                                                                                                                                                                                                                                      
& \tfrac{4}{3} f_{12} f_{21}^3+\tfrac{7}{12} f_{11}^3 f_{12}+\tfrac{7}{12} f_{21}^3 f_{22}+3 f_{11} f_{12} f_{21}^2+f_{11} f_{21}^2 f_{22}+\tfrac{7}{3} f_{11}^2 f_{12} f_{21}\\
&\qquad+\tfrac{1}{2}f_{11}^2 f_{21} f_{22}
+\tfrac{1}{2} f_{11}^2 g_{1}+\tfrac{1}{2} f_{21}^2 g_{1}+f_{11} f_{21} g_{1}+\tfrac{1}{40}=0,\\                                                                                                                                          
& 2 f_{13} f_{21}^2+\tfrac{4}{3} f_{11}^2 f_{13}+f_{11}^2 f_{23}+\tfrac{4}{3} f_{21}^2 f_{23}+3 f_{11} f_{13} f_{21}+2 f_{11} f_{21} f_{23}-\tfrac{1}{60}=0,\\                                                                                                                                                                                                               
& \tfrac{1}{3} f_{11} f_{12}^2+\tfrac{1}{3} f_{21} f_{22}^2+f_{12}^2 f_{21}+f_{12} f_{21} f_{22}+f_{12} g_{1}+f_{22} g_{1}-\tfrac{1}{60}=0,\\                                                                                                                                                                                                                             
&-f_{12} f_{13}-2 f_{12} f_{23}-f_{22} f_{23}-g_{3}+\tfrac{1}{30}=0                                                                                                                                                                                                                                                   
\end{align*}
in the 8 variables $f_{11},f_{12},f_{13},f_{21},f_{22},f_{23},g_{1},g_{3}$.
Following \cite[Section~4.3]{SergioFernandoMPaper2} we add the  condition
$$\coeff(\AA_1\AA_1\AA_1\AA_1\AA_1\AA_2, \nS-\ee^\Omega)=0$$ leading to an eighth equation
\begin{align*}
& \tfrac{4}{15} f_{12} f_{21}^5+\tfrac{31}{360} f_{11}^5 f_{12}+\tfrac{31}{360} f_{21}^5 f_{22}+f_{11} f_{12} f_{21}^4+\tfrac{1}{4} f_{11} f_{21}^4 f_{22}+\tfrac{14}{9} f_{11}^2 f_{12} f_{21}^3\\
&\qquad+\tfrac{7}{24} f_{11}^2 f_{21}^3 f_{22}
+\tfrac{5}{4} f_{11}^3 f_{12} f_{21}^2+\tfrac{1}{6} f_{11}^3 f_{21}^2 f_{22}+\tfrac{31}{60} f_{11}^4 f_{12} f_{21}+\tfrac{1}{24} f_{11}^4 f_{21} f_{22}\\
&\qquad+\tfrac{1}{24} f_{11}^4 g_{1}+\tfrac{1}{24} f_{21}^4 g_{1}
+\tfrac{1}{6} f_{11} f_{21}^3 g_{1}+\tfrac{1}{4} f_{11}^2 f_{21}^2 g_{1}+\tfrac{1}{6} f_{11}^3 f_{21} g_{1}+\tfrac{1}{1008}=0.
\end{align*}
For this system of 8 equations,
a computer algebra system readily finds 5 solutions involving only real numbers and
2 solutions involving also complex numbers. One of the real solutions is given in
(\ref{eq:coeff_unconv}). 

For the corresponding scheme we obtain 
$\mathrm{LEM}\doteq 0.0167$, which, within the given level of precision, is equal to the lower bound  (\ref{eq:lem_bound}).

Performing the substitution (\ref{eq:Al_substitution}) in 
(\ref{eq:unconv_scheme}), (\ref{eq:coeff_unconv})
now with Gaussian nodes and weights
of order six,
$$
(x_k)=\left(\tfrac{1}{2}-\tfrac{\sqrt{15}}{10},\ \tfrac{1}{2},\ \tfrac{1}{2}+\tfrac{\sqrt{15}}{10}\right),
\quad (w_k)=\left(\tfrac{5}{18},\ \tfrac{4}{9},\ \tfrac{5}{18} \right),
$$
we obtain
\begin{align*}
\nS_A(\tau, t_n)&=\ee^{\tau a_{13}A(t_n+\tau x_1)+\tau a_{12}A(t_n+\tau x_2)+\tau a_{11}A(t_n+\tau x_3)}\\
&\quad\times\ee^{\tau a_{23}A(t_n+\tau x_1)+\tau a_{22}A(t_n+\tau x_2)+\tau a_{21}A(t_n+\tau x_3)}\\
&\quad \times\ee^{\tau^2b_{1}[A(t_n+\tau x_2),\,A(t_n+\tau x_3)-A(t_n+\tau x_1)]
+\tau^2b_{2}[A(t_n+\tau x_3),\,A(t_n+\tau x_1)]}\\
%+\tau^2b_{3}[A(t_n+\tau x_1),\,A(t_n+\tau x_2)]}\\
&\quad\times\ee^{\tau a_{21}A(t_n+\tau x_1)+\tau a_{22}A(t_n+\tau x_2)+\tau a_{23}A(t_n+\tau x_3)}\\
&\quad\times\ee^{\tau a_{11}A(t_n+\tau x_1)+\tau a_{12}A(t_n+\tau x_2)+\tau a_{13}A(t_n+\tau x_3)}
\end{align*}
with coefficients
\begin{align*}
(a_{jk}) \doteq\ & \left({\fontsize{9}{9.9}\selectfont\begin{array}{rrr}
 0.210034604487283585&
-0.059278594478107764&
 0.015842684397126231\\
 0.108253098901669707&
 0.281500816700329986&
-0.056352610008301747
\end{array}}\right),\\
 b_1\doteq \ &  0.000355878988200746,\quad
b_2\doteq -0.000421029282637892.
\end{align*}
%A=[0.210034604487283585 -0.059278594478107764  0.015842684397126231
%   0.108253098901669707  0.281500816700329986 -0.056352610008301747
   
\subsection{Commutator-free integrator of order 8 involving only 8 exponentials}
\label{subsct:eight}
In \cite[Section~4.4]{alvfeh11} an 8th order commutator-free Magnus-type integrator
 involving 11 exponentials is proposed.
 Here we report on the derivation of a new such  8th order integrator with essentially the same accuracy but  involving only 8 exponentials.
 
We make the symmetric ansatz
\begin{align*}
        \nS&=\ee^{f_{11}\AA_1-f_{12}\AA_2+f_{13}\AA_3-f_{14}\AA_4}
    \,\ee^{f_{21}\AA_1-f_{22}\AA_2+f_{23}\AA_3-f_{24}\AA_4}\,\\
        &\quad\times\ee^{f_{31}\AA_1-f_{32}\AA_2+f_{33}\AA_3-f_{34}\AA_4}
        \,\ee^{f_{41}\AA_1-f_{42}\AA_2+f_{43}\AA_3-f_{44}\AA_4}\,\\
        &\quad\times\ee^{f_{51}\AA_1-f_{52}\AA_2+f_{53}\AA_3-f_{54}\AA_4}
        \,\ee^{f_{61}\AA_1+f_{63}} 
        \,\ee^{f_{51}\AA_1+f_{52}\AA_2+f_{53}\AA_3+f_{54}\AA_4}\,\\
        &\quad\times\ee^{f_{41}\AA_1+f_{42}\AA_2+f_{43}\AA_3+f_{44}\AA_4}
        \,\ee^{f_{31}\AA_1+f_{32}\AA_2+f_{33}\AA_3+f_{34}\AA_4}\\
        &\quad\times\ee^{f_{21}\AA_1+f_{22}\AA_2+f_{23}\AA_3+f_{24}\AA_4}
    \,\ee^{f_{11}\AA_1+f_{12}\AA_2+f_{13}\AA_3+f_{14}\AA_4}
\end{align*} 
with 8 exponentials
containing 22 coefficients $f_{11},f_{12},\dots$.
Corresponding to the 22
Lyndon words of odd grade $\leq 8$ over the alphabet $\nA=\{\AA_1,\AA_2, \AA_3, \AA_4\}$,
there are 22 order conditions $\coeff(w, \nS-\ee^\Omega)=0$.

Our Maple implementation of algorithm (\ref{ex:first_component})--(\ref{eq:expY}) generates
the corresponding 22 polynomial equation of maximum degree 6 in less than one second on a current standard desktop PC. The size of the output containing the equations is 289 kilobytes.
Due to the considerable complexity of the equations, a certain sophistication is required
for the efficient calculation of solutions of these equations. Therefore,
we use the symbolic manipulation system FORM \cite{KUIPERS20131453}\footnote{\url{http://www.nikhef.nl/~form}} to generate highly
optimized C code for the evaluation of the equations and their Jacobi matrix. The size of the
resulting C code is 120 kilobytes, its generation takes less than 5 seconds, its compilation to
machine code with high optimization level less than 3 seconds.
A nonlinear solver from the Julia package NLsolve.jl\footnote{\url{https://github.com/JuliaNLSolvers/NLsolve.jl}}
applied to the thus pre-processed  equations,
which is repeatedly restarted with random starting values, 
% A simple implementation of
%Newton's method, which  repeatedly restarts the Newton iteration with random starting %values,
computes more than 40 solutions per second. %(not all of them different, of course).
 Remarkably,
for 3 of the obtained solutions some consecutive exponentials commute and can thus
be joined together, 
such that  only 8 exponentials remain, e.g.,
\begin{align*}
        \nS&=\ee^{f_{11}\AA_1-f_{12}\AA_2+f_{13}\AA_3-f_{14}\AA_4}
    \,\ee^{f_{21}\AA_1-f_{22}\AA_2+f_{23}\AA_3-f_{24}\AA_4}\\
        &\quad\times\ee^{f_{31}\AA_1-f_{32}\AA_2+f_{33}\AA_3-f_{34}\AA_4}
        \,\ee^{f_{41}\AA_1-f_{42}\AA_2+f_{43}\AA_3-f_{44}\AA_4}\\
        &\quad\times\ee^{f_{41}\AA_1+f_{42}\AA_2+f_{43}\AA_3+f_{44}\AA_4}
        \,\ee^{f_{31}\AA_1+f_{32}\AA_2+f_{33}\AA_3+f_{34}\AA_4}\\
        &\quad\times\ee^{f_{21}\AA_1+f_{22}\AA_2+f_{23}\AA_3+f_{24}\AA_4}
    \,\ee^{f_{11}\AA_1+f_{12}\AA_2+f_{13}\AA_3+f_{14}\AA_4}
\end{align*} 
with
\begin{align*}
&(f_{jk}) \doteq \\
%&{\fontsize{6.65}{7.3}\selectfont\left(\begin{array}{rrrr}
% 0.16808609092999572468  &  0.15127748183699615221 &  
% 0.11766026365099700728  &  0.06723443637199828987 \\
% 0.35936642058144077472  &  0.13138306991907331581 &
%-0.13090134825412630033  & -0.20289875692177817851 \\
% 0.40827036864282357822  & -0.23275549365763740519 & 
%-0.08579083407432252902  &  0.33387939732570943825 \\
%-0.43572288015426007762  &  0.24554796063280398470 &      
% 0.09903191867745182207  & -0.36832194094920597705
%\end{array}\right)},
&{\fontsize{7.2}{8}\selectfont\left(\begin{array}{rrrr}
 0.168086090929995725  &  0.151277481836996152 &  
 0.117660263650997007  &  0.067234436371998290 \\
 0.359366420581440775  &  0.131383069919073316 &
-0.130901348254126300  & -0.202898756921778179 \\
 0.408270368642823578  & -0.232755493657637405 & 
-0.085790834074322529  &  0.333879397325709438 \\
-0.435722880154260078  &  0.245547960632803985 &      
 0.099031918677451822  & -0.368321940949205977
\end{array}\right)}.
\end{align*}
For this scheme we obtain $\mathrm{LEM}\doteq 0.008976$, which is only a little larger than the lower bound 
$0.008956$ of  (\ref{eq:lem_bound}), and a little smaller than  the $\mathrm{LEM}\doteq 0.008999$ for
the scheme from \cite[Table~4]{alvfeh11}. 

Performing in this scheme the substitution (\ref{eq:Al_substitution}) 
with Gaussian nodes and weights
of order eight,
$$
(x_k) = \left(
\tfrac{1}{2}-\sqrt{\tfrac{15+2\sqrt{30}}{140}},\
\tfrac{1}{2}-\sqrt{\tfrac{15-2\sqrt{30}}{140}},\
\tfrac{1}{2}+\sqrt{\tfrac{15-2\sqrt{30}}{140}},\
\tfrac{1}{2}+\sqrt{\tfrac{15+2\sqrt{30}}{140}}
\right),
$$
$$
(w_k) =\left(
\tfrac{1}{4}-\tfrac{\sqrt{30}}{72},\
\tfrac{1}{4}+\tfrac{\sqrt{30}}{72},\
\tfrac{1}{4}+\tfrac{\sqrt{30}}{72},\
\tfrac{1}{4}-\tfrac{\sqrt{30}}{72}
\right),
$$ 
we obtain
$$\nS_A(t_n,\tau) = \prod_{j=8,\ldots,1}\exp\big(\tau\sum_{k=1}^4 a_{jk}A(t_n+\tau x_k)\big)
$$
with 
\begin{align*}
&(a_{jk}) \doteq \\
%&{\fontsize{6.65}{7.3}\selectfont\left(\begin{array}{rrrr}
% 0.18480846262431303905   &    -0.02072066212020042014 &
% 0.00502711867953985525   &    -0.00102882825365674947 \\
%-0.02344947887011890424   &     0.42125900994862326027 &     
%-0.04748789863325976613   &     0.00904478813619618483 \\
% 0.04462036092361700795   &    -0.21236935686571736948 &      
% 0.56998951780225396591   &     0.00602984678266997385 \\
%-0.04937525157353677699   &     0.23298947686588255412 &     
%-0.62261462824584900847   &     0.00327752279924315371 \\
% 0.00327752279924315371   &    -0.62261462824584900847 &      
% 0.23298947686588255412   &    -0.04937525157353677699 \\
% 0.00602984678266997385   &     0.56998951780225396591 &     
%-0.21236935686571736948   &     0.04462036092361700795 \\
% 0.00904478813619618483   &    -0.04748789863325976613 &      
% 0.42125900994862326027   &    -0.02344947887011890424 \\
%-0.00102882825365674947   &     0.00502711867953985525 &     
%-0.02072066212020042014   &     0.18480846262431303905
%\end{array}\right)}
&{\fontsize{7.2}{8}\selectfont\left(\begin{array}{rrrr}
 0.184808462624313039   &    -0.020720662120200420 &
 0.005027118679539855   &    -0.001028828253656749 \\
-0.023449478870118904   &     0.421259009948623260 &     
-0.047487898633259766   &     0.009044788136196185 \\
 0.044620360923617008   &    -0.212369356865717369 &      
 0.569989517802253966   &     0.006029846782669974 \\
-0.049375251573536777   &     0.232989476865882554 &     
-0.622614628245849008   &     0.003277522799243154 \\
 0.003277522799243154   &    -0.622614628245849008 &      
 0.232989476865882554   &    -0.049375251573536777 \\
 0.006029846782669974   &     0.569989517802253966 &     
-0.212369356865717369   &     0.044620360923617008 \\
 0.009044788136196185   &    -0.047487898633259766 &      
 0.421259009948623260   &    -0.023449478870118904 \\
-0.001028828253656749   &     0.005027118679539855 &     
-0.020720662120200420   &     0.184808462624313039
\end{array}\right)}.
\end{align*}

%\begin{acknowledgements}
%This work was supported by  the Austrian Science Fund (FWF) under Grant
%P 30819-N32.
%\end{acknowledgements}

%\bibliographystyle{plain}%{alpha}
%\bibliography{order_conditions}

\end{document}